\theoremstyle{plain}
\numberwithin{equation}{section}
\begin{document}
\title[RESIDUAL-BASED A POSTERIORI ERROR
ESTIMATES]{A POSTERIORI ERROR
	ANALYSIS FOR A LAGRANGE MULTIPLIER METHOD FOR A STOKES/BIOT FLUID-POROELASTIC STRUCTURE INTERACTION MODEL}

%\author[J. ADETOLA]{Jamal ADETOLA}
\author[HOUEDANOU K. W.]{HOUEDANOU Koffi Wilfrid}
%\author[B. AHOUNOU]{Bernardin AHOUNOU}
%\address{Institut de Math\'ematiques et de Sciences Physiques (IMSP)/ Universit\'e d'Abomey-Calavi}%
%	\email{\href{mailto: Author <adetolajamal58@yahoo.com>}{adetolajamal58@yahoo.com}}
	\address{D\'epartement de Math\'ematiques/Facult\'e des Sciences et Techniques (FAST)/Universit\'e d'Abomey-Calavi (UAC)}
\email{\href{mailto: Author <khouedanou@yahoo.fr>}{khouedanou@yahoo.fr}}
%\address{D\'epartement de Math\'ematiques/Facult\'e des Sciences et Techniques (FAST)/Universit\'e d'Abomey-Calavi}
%\email{\href{mailto: Author <bahounou@yahoo.fr>}{bahounou@yahoo.fr}}

\subjclass[2010]{ 74S05,74S10,74S15, 74S20,74S25,74S30 }

\keywords{Stokes-Biot model; conforming finite element method; A posteriori error analysis.}

%\thanks{The author acknowledges the grants that supported the research.}

\begin{abstract}
	In this work we develop an a posteriori error analysis of a conforming mixed finite element method for 
	solving the coupled problem arising in the interaction between a free fluid and a fluid in a poroelastic medium on 
isotropic meshes in $\mathbb{R}^d$, $d\in\{2,3\}$. The approach utilizes the semi-discrete formulation  proposed by  Ilona Ambartsumyan et al.  in \cite{1}.
The a posteriori error estimate is based on a suitable evaluation on the residual of the finite element
solution. It is proven that the a posteriori error estimate provided in this paper is both reliable and efficient.
The proof of reliability makes use of suitable auxiliary problems, diverse continuous inf-sup conditions satisfied by the bilinear forms involved, Helmholtz decomposition, and local approximation properties
of the Cl\'ement interpolant.
On the other hand, inverse inequalities, and the localization technique based on simplexe-bubble and face-bubble
functions are the main tools for proving the efficiency of the estimator.
Up to minor modifications,
our analysis can be extended to other finite element subspaces yielding a stable Galerkin scheme.
\end{abstract}
\maketitle
\newpage 
\section{Introduction\label{s1}}
In this paper, we develop an a posteriori error analysis for solving the interaction of a free incompressible viscous Newtonian fluid with a fluid within a poroelastic medium. This is a challenging multiphysics problem with applications to predicting and controlling processes arising in groundwater flow in fractured aquifers, oil and gas extraction, arterial flows, and industrial filters. 
In these applications, it is important to model properly the interaction
between the free fluid with the fluid within the porous medium, and to take into account the effect of
the deformation of the medium. For example, geomechanical effects play an important role in hydraulic
fracturing, as well as in modeling phenomena such as subsidence and compaction.

We adopt the Stokes equations to model the free fluid and the Biot system \cite{2} for the fluid in the
poroelastic media. In the latter, the volumetric deformation of the elastic porous matrix is complemented
with the Darcy equation that describes the average velocity of the fluid in the pores. The model features two different kinds of coupling across the interface: Stokes-Darcy coupling \cite{AHN:15,SGR:2016,CGOS:2015,46,47,HJA:2017,HA:2016, H:2019,H:2015} and fluid-structure interaction (FSI) 
\cite{3',4',5',6',7'}.

The well-posedness of the mathematical model based on the Stokes-Biot system for the coupling between a fluid and a poroelastic structure is studied in \cite{8'}. A numerical study of the problem, using a Navier-Stokes equations for the fluid, is presented in  \cite{3',9'}, utilizing a variational multiscale approach to stabilized the finite element spaces. The problem is solved using both a monolithic and a partitioned approach, with the latter requiring subiterations between the two problems.

A posteriori error estimators are computable quantities, expressed in terms of the discrete solution and of the data that measure the actual discrete errors without the knowledge of the exact solution. They are essential to design adaptive mesh refinement algorithms which aqui-distribute the computational effort and optimize the approximation efficiency. Since the pioneering work of Babu\v{s}ka and Rheinboldt 
\cite{1978',1993,BR78a,BR78b}, adaptive finite element methods based on a posteriori error estimates have been extensively investigated.

In \cite{1}, 
semidiscrete continuous-in-time approximation
 has been proposed for the weak coupled mixed formulation.
For the discretization of the fluid velocity and pressure the authors have used the finite elements  which include the MINI-elements, the Taylor-Hood elements and the conforming Crouzeix-Raviart elements. For the discretization of the porous medium problem they choose the spaces that include Raviart-Thomas and Douglas-Marini elements. An a priori error analysis is performed with some numerical tests confirming the convergence rates. To our best knowledge, there is no a posteriori error estimation for the Stokes/Biot fluid-poroelastic structure interaction model for finite element methods. Here we develop such a posteriori error analysis for the semi-discrete conforming finite element methods.
We have got a new family of a local indicator error $\Theta_K$ (see Definition \ref{dindK},
eq. (\ref{indK})) and global $\Theta$ (eq. \ref{indKg}). We prove that our indicators error are efficiency and reliability, and then, are
optimal.
The global inf-sup condition is the main tool yielding the reliability. In turn, The local
efficiency result is derived using the technique of bubble function introduced by R. Verf\"{u}rth
 \cite{verfurth:96b} and used in similar context by C. Carstensen \cite{carstensenandall}.
 
 The paper is organized as follows. Some preliminaries and notation are given in Section \ref{s2}. In
 Section \ref{s3'}, the a posteriori error estimates are derived. We offer our conclusion and the further
 works in Section \ref{s6}.
\section{Preliminaries and notations}\label{s2}
\subsection{Stokes-Biot model problem}
We consider a multiphysics model problem for free fluid's interaction with a flow in a deformable porous media, where the simulation domain $\Omega\subset \mathbb{R}^d$, $d=2,3$, is a union of non-overlapping regions $\Omega_f$ and $\Omega_p$. Here $\Omega_f$ is a free fluid region with flow governed by the Stokes equations and $\Omega_p$ is a poroelastic material governed by the Biot system. For simplicity of notation, we assume that each region is connected. The extension to non-connected regions is straightforward. Let $\Gamma_{fp}=\partial \Omega_f\cap \partial \Omega_p$ (see Fig. \ref{F1}).
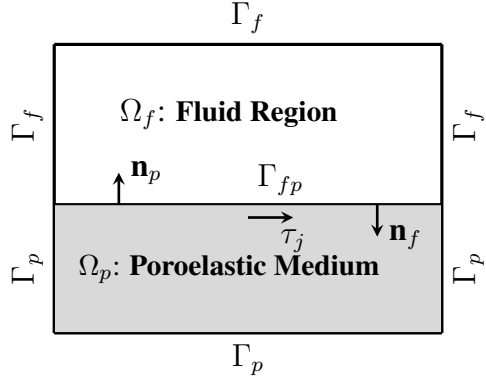
\begin{figure}[http!!!]
	\centering
	\begin{center}
		\tikzstyle{grisEncadre}=[thick, dashed, fill=gray!20]
		\begin{tikzpicture}[scale=0.85]
		color=gray!100;
		\draw [very thick](1,1)--(7,1);
		\draw [very thick](1,1)--(1,5.5);
		\draw [very thick](1,5.5)--(7,5.5);
		\draw [very thick](7,1)--(7,5.5);
		\draw [very thick](1,3)--(7,3);
		\draw [black,fill=gray!30] (1,1) -- (7,1) -- (7,3) --(1,3) -- cycle;
		\draw (3.7,1.6) node [above]{$\mbox{ \small\small $\Omega_p$:  \textbf{\small \small Poroelastic Medium} }$};
		\draw (3.7,4) node [above]{$\mbox{  $\Omega_f$:  \textbf{\small \small Fluid Region} }$};
		\draw [>=stealth,->] [line width=1pt](2,3)--(2,3.5) node [right]{$\textbf{n}_p$};
		\draw [>=stealth,->] [line width=1pt](6,3)--(6,2.5) node [right]{$\textbf{n}_f$};
		\draw [>=stealth,->] [line width=1pt](4,2.8)--(4.7,2.8) node [below]{$\tau_j$};
		\draw  (4.5,3.8) node [below]{$\Gamma_{fp}$};
		\draw[line width=0.5pt](1,1)--(1,3) node[midway,above,sloped]{$\Gamma_p$};
		\draw[line width=0.5pt](1,1)--(7,1) node[midway,below,sloped]{$\Gamma_p$};
		\draw[line width=0.5pt](7,1)--(7,3) node[midway,below,sloped]{$\Gamma_p$};
		\draw[line width=0.5pt](1,3)--(1,5.5) node[midway,above,sloped]{$\Gamma_f$};
		\draw[line width=0.5pt](1,5.5)--(7,5.5) node[midway,above,sloped]{$\Gamma_f$};
		\draw[line width=0.5pt](7,3)--(7,5.5) node[midway,below,sloped]{$\Gamma_f$};
		\end{tikzpicture}
	\end{center}
	\caption{\footnotesize{Global domain $\Omega$ consisting of the fluid region $\Omega_f$ and the 
			poroelastic media region $\Omega_p$ separated by the interface $\Gamma_{fp}$.}}
	\label{F1}
\end{figure}
\mbox{  }\\
Let $(\textbf{u}_{\star},p_{\star})$ be the velocity-pressure pair in $\Omega_{\star}$, $\star=f, p$, and let $\eta_p$ be the displacement in $\Omega_p$. Let $\mu> 0$ be the fluid viscosity, let $\textbf{f}_{\star}$ be the body force terms, and let $q_{\star}$ be external source or sink terms. Let $\textbf{D}(\textbf{u}_f)$ and $\sigma_f(\textbf{u}_f,p_f)$ denote, respectively, the deformation rate tensor and the stress tensor:
$$
\textbf{D}(\textbf{u}_f)=\frac{1}{2}\left(\nabla \textbf{u}_f+{\nabla \textbf{u}_f}^T\right), \mbox{     and   }  
\sigma_f(\textbf{u}_f,p_f)=-p_f\textbf{I}+2\mu\textbf{D}(\textbf{u}_f).
$$
In the free fluid region $\Omega_f$, $(\textbf{u}_f,p_f)$ satisfy the Stokes equations:
\begin{eqnarray}\label{stokes1}
-\nabla\cdot\sigma_f(\textbf{u}_f,p_f)=\textbf{f}_f \mbox{      in   } \Omega_f\times (0,T]\\\label{stokes2}
\nabla\cdot\textbf{u}_f=q_f \mbox{    in   } \Omega_f\times (0,T]
\end{eqnarray}
where $T> 0$ is the final time. Let $\sigma_e(\eta_p)$ and $\sigma_p(\eta_p,p_p)$ be the elastic and poroelastic stress tensors, respectively: 
\begin{eqnarray}\label{poroelastic}
	\sigma_e(\eta_p)=\lambda_p\left(\nabla\cdot\eta_p\right)\textbf{I}+2\mu_p\textbf{D}(\eta_p), \mbox{       }  \sigma_p(\eta_p,p_p)=\sigma_e(\eta_p)-\alpha p_p \textbf{I},
\end{eqnarray}
where $0< \lambda_{\min}\leq \lambda_p(\textbf{x})\leq \lambda_{\max}$ and  $0< \mu_{\min}\leq \mu_p(\textbf{x})\leq \mu_{\max}$ are the Lam\'e parameters and $0\leq \alpha\leq 1$ is the Biot-Wallis constant. The poroelasticity region $\Omega_p$ is governed by the quasi-static Biot system \cite{1}:
\begin{eqnarray}\label{darcy1}
	-\nabla\cdot \sigma_p(\eta_p,p_p)=\textbf{f}_p, \mbox{  } \mu K^{-1}\textbf{u}_p+\nabla p_p=0 \mbox{  in  } \Omega_p\times (0,T],\\\label{darcy2}
	\frac{\partial}{\partial t}\left(s_0p_p+\alpha\nabla\cdot \eta_p\right)+\nabla\cdot \textbf{u}_p=q_p \mbox{
	 in  } \Omega_p\times (0,T],
\end{eqnarray}
where $s_0\geq 0$ is a storage coefficient and $K$ the symmetric and uniformly positive definite rock permeability tensor, satisfying, for some constants $0< k_{\min}\leq k_{\max}$,
$$
\forall \xi \in\mathbb{R}^d, k_{\min}\xi^T\xi\leq \xi^TK(\textbf{x})\xi\leq k_{\max} \xi^T\xi, \forall \textbf{x}\in\Omega_p.
$$
Following \cite{2}, the interface conditions on the fluid-poroelasticity interface $\Gamma_{fp}$ are mass conservation, balance of stresses, and the Beavers-Joseph-Saffman (BJS) condition \cite{23} modeling slip with friction:
\begin{eqnarray}\label{i1}
	\textbf{u}_f\cdot\textbf{n}_f+\left(\frac{\partial \eta_p}{\partial t}+\textbf{u}_p\right)\cdot\textbf{n}_p=0  \mbox{    on    } \Gamma_{fp}\times (0,T],\\\label{i2}
	-(\sigma_f\textbf{n}_f)\cdot \textbf{n}_f=p_p,  \mbox{  }  \sigma_f\textbf{n}_f+\sigma_p\textbf{n}_p=0 \mbox{       on    }  \Gamma_{fp}\times (0,T],\\\label{i3}
	-(\sigma_f\textbf{n}_f)\cdot\tau_{f,j}=\mu\alpha_{BJF} \sqrt{K_j^{-1}}\left(\textbf{u}_f-\frac{\partial \eta_p}{\partial t}\right)\cdot\tau_{f,j} \mbox{     on   } \Gamma_{fp}\times (0,T],
\end{eqnarray}
where $\textbf{n}_f$ and $\textbf{n}_p$ are the outward unit normal vectors to $\partial \Omega_f$, and $\partial \Omega_p$, respectively, $\tau_{f,j}$, $1\leq j\leq d-1$, is an orthogonal system of unit tangent vectors on $\Gamma_{fp}$, $K_j=\left(K\tau_{f,j}\right)\cdot\tau_{f,j}$, and $\alpha_{BJS}\geq 0$ is an experimentally determined friction coefficient. We note that continuity of flux constraints the normal velocity of the solid skeleton, while the $BJS$ condition accounts for its tangential velocity.

The above system of equations needs to be complemented by a set of boundary and initial conditions. Let $\Gamma_f=\partial \Omega_f\cap \partial \Omega$ and $\Gamma_p=\partial \Omega_p\cap \partial\Omega$. Let $\Gamma_p=\Gamma_p^D\cup\Gamma_{p}^N$. We assume for simplicity homogeneous boundary conditions:
$$ \textbf{u}_f=0 \mbox{   on  } \Gamma_f\times (0,T], \mbox{  } \eta_p=\textbf{0} \mbox{
 on  } \Gamma_p\times (0,T], \mbox{   } p_p=0 \mbox{   on  } \Gamma_p^D\times (0,T], \mbox{    } \textbf{u}_p\cdot \textbf{n}_p=0 \mbox{   on  }   \Gamma_p^N\times (0,T].$$
To ovoid the issue with restricting the mean value of the pressure, we assume that $|\Gamma_{p}^D|> 0$. We also assume that $\Gamma_p^D$ is not adjacent to the interface $\Gamma_{fp}$, i.e., $\mbox{dist}(\Gamma_p^D,\Gamma_{fp})\geq s> 0$. Non-homogeneous displacement and velocity conditions can be handled in a standard way by adding suitable extensions of the boundary data. The pressure boundary condition is natural in the mixed Darcy formulation, so non-homogeneous pressure data would lead to an additional boundary term. We further sat the initial conditions:
$$p_p(\textbf{x},0)=p_{p,0}(\textbf{x}), \mbox{     } \eta_p(\textbf{x},0)=\eta_{p,0}(\textbf{x}) \mbox{     in    } \Omega_p.$$
\subsection{Weak formulation}
In this part, we first introduce some Sobolev spaces \cite{Adams:1978} and norms.
% We denote the usual $L^2$ norm by $\parallel\cdot\parallel$ for square integrable scalar/vector/\\
%matrix-valued functions defined on domain $\Omega_f$ or $\Omega_p$, and the corresponding inner product by $(\cdot,\cdot)$, and similarly the $L^2$-norm in $L^2(\Gamma)$ by $\parallel\cdot\parallel_{\Gamma}$. %instance, 
%\begin{eqnarray*}
%\parallel p \parallel:=\left(\displaystyle\sum_{i=1}^{d}\int_{\Omega_f} |p_i|^2 dx\right)^{1/2}, \forall p=()
%\end{eqnarray*}
%We begin this subsection by introducing some useful notations.\\ 
If $W$ is a bounded domain of $\mathbb{R}^d$ and $m$ 
is a non negative integer, the Sobolev space $H^m(W)=W^{m,2}(W)$ is 
defined in the usual way with the usual norm $\parallel\cdot\parallel_{m,W}$ and semi-norm $|.|_{m,W}$. In particular, 
$H^0(W)=L^2(W)$ and we write $\parallel\cdot\parallel_W$ for $\parallel\cdot\parallel_{0,W}$.
Similarly we   denote by
$(\cdot,\cdot)_{W}$  the $L^2(W)$ $[L^2(W)]^d$ or $ [L^2(W)]^{d\times d}$ inner product.
For shortness if $W$ is equal to $\Omega$, we will drop  the index $\Omega$, while  for any $m\geq 0$, 
$\parallel\cdot\parallel_{m,\star}=\parallel\cdot\parallel_{m,\Omega_{\star}}$, $|.|_{m,\star}=|.|_{m,\Omega_{\star}}$ 
and $(.,.)_{\star}=(\cdot,\cdot)_{\Omega_{\star}}$, for $\star=f,p$.
The space  $H_0^m(\Omega)$ denotes the closure of $C_0^{\infty}(\Omega)$ in $H^{m}(\Omega)$. Let $[H^m(\Omega)]^d$ be the space of
vector valued functions $\textbf{v}=(v_1,\ldots,v_d)$ with components  $v_i$ in $H^m(\Omega)$. The 
norm and the seminorm on $[H^m(\Omega)]^d$ are given by 
\begin{eqnarray}
\parallel\textbf{v}\parallel_{m,\Omega}:=\left(\sum_{i=0}^d\parallel v_i\parallel_{m,\Omega }^2\right)^{1/2} 
\mbox{  and  } |\textbf{v}|_{m,\Omega}:= \left(\sum_{i=0}^d |v_i|_{m,\Omega}^2\right)^{1/2}.
\end{eqnarray}
For a connected open subset of the boundary $E\subset \partial \Omega_f\cup\partial \Omega_p$, we write 
$\langle.,.\rangle_{E}$ for the $L^2(E)$ inner product (or duality pairing), that is, for scalar valued functions $\lambda$, 
$\sigma$ one defines:
\begin{eqnarray}
\langle\lambda,\sigma\rangle_{E}:=\int_{E} \lambda\sigma ds
\end{eqnarray}
In the following we derive a Lagrange multiplier type weak formulation of the system, which will be the basis for our finite element approximation.
Let 
\begin{eqnarray}
	\textbf{V}_f=\left\{\textbf{v}_f\in H^1(\Omega_f)^d: \textbf{v}_f=\textbf{0} \mbox{  on  } \Gamma_f\right\}, \mbox{     }  W_f=L^2(\Omega_f),\\
	\textbf{V}_p=\left\{ \textbf{v}_p\in H(\dive; \Omega_p): \textbf{v}_p\cdot\textbf{n}_p=0 \mbox{   on   } \Gamma_p^N\right\}, \mbox{        }    W_p=L^2(\Omega_p),\\ 
	\textbf{X}_p= \left\{\xi_p\in H^1(\Omega_p)^d: \xi_p=\textbf{0} \mbox{    on   } \Gamma_p\right\},         
\end{eqnarray}
where $H(\dive;\Omega_p)$ is the space of $L^2(\Omega_p)^d$-vectors with divergence in $L^2(\Omega_p)$ with a norm 
$$
\parallel\textbf{v}\parallel_{H(\dive;\Omega_p)}^2:=\parallel\textbf{v}\parallel_{\Omega_p}^2+
\parallel\nabla\cdot \textbf{v}\parallel_{\Omega_p}^2.
$$
We define the global velocity and pressure spaces as
\begin{eqnarray*}
	\textbf{V}=\left\{ \textbf{v}=(\textbf{v}_f,\textbf{v}_p)\in \textbf{V}_f\times \textbf{V}_p\right\}, \mbox{       } W=\left\{w=(w_f,w_p)\in W_f\times W_p\right\},
\end{eqnarray*}
with norms
$$
\parallel\textbf{v}\parallel_{\textbf{V}}^2:=\parallel\textbf{v}_f\parallel_{\Omega_f}^2+
\parallel \textbf{v}_p\parallel_{H(\dive;\Omega_p)}^2, \mbox{  and  } 
\parallel w\parallel_{W}^2=\parallel w_f\parallel_{\Omega_f}^2+\parallel w_p\parallel_{\Omega_p}^2.
$$
The weak formulation is obtained by mutiplying the equations in each region by suitable test functions, integrating by parts the second order terms in space, and utilizing the interface and boundary conditions.
\\Let define
\begin{eqnarray*}
	a_f(\textbf{u}_f,\textbf{v}_f)&:=&\left(2\mu\textbf{D}(\textbf{u}_f),\textbf{D}(\textbf{v}_f)\right)_{\Omega_f},\\
	a_p^d(\textbf{u}_p,\textbf{v}_p)&:=& \left(\mu K^{-1}\textbf{u}_p,\textbf{v}_p\right)_{\Omega_p},\\
	a_p^e(\eta_p,\xi_p)&:=&\left(2\mu \textbf{D}(\eta_p),\textbf{D}(\xi_p)\right)_{\Omega_p}+\left(\lambda_p\nabla\cdot\eta_p,\nabla\cdot \xi_p\right)_{\Omega_p}
\end{eqnarray*}
be the bilinear forms related to Stokes, Darcy and the elasticity operator, respectively. Let 
$$b_{\star} (\textbf{v},w):=-(\nabla\cdot \textbf{v},w)_{\Omega_{\star}}.$$
Integration by parts in (\ref{stokes1}) and the two equations in (\ref{darcy1}) lead to the interface term
$$I_{\Gamma_{fp}}=-\langle\sigma_f\textbf{n}_f,\textbf{v}_f\rangle_{\Gamma_{fp}}-\langle \sigma_p\textbf{n}_p,\xi_p\rangle_{\Gamma_{fp}}+\langle p_p,\textbf{v}_p\cdot\textbf{n}_p\rangle_{\Gamma_{fp}}.$$
Using the first condition for balance of normal stress in (\ref{i2}) we set
$$\lambda=-(\sigma_f\textbf{n}_f)\cdot\textbf{n}_f=p_p \mbox{     on   } \Gamma_{fp},$$
which will be used as a Lagrange multiplier to impose the mass conservation interface condition (\ref{i1}). Utilizing the BJS condition (\ref{i3}) and the second condition for balance of stresses in (\ref{i2}), we obtain 
$$I_{\Gamma_{fp}}=a_{BJS}(\textbf{u}_f,\partial_t \eta_p; \textbf{v}_f, \xi_p)+b_{\Gamma} (\textbf{v}_f,\textbf{v}_p,\xi_p,\lambda),$$
where
\begin{eqnarray*}
a_{BJS}(\textbf{u}_f,\eta_p;\textbf{v}_f,\xi_p)&=&\displaystyle\sum_{j=1}^{d-1}
\langle \mu\alpha_{BJS}\sqrt{K_j^{-1}}(\textbf{u}_f-\eta_p)\cdot\tau_{f,j},(\textbf{v}_f-\xi_p)\cdot\tau_{f,j}\rangle_{\Gamma_{fp}}\\
b_{\Gamma}(\textbf{v}_f,\textbf{v}_p,\xi_p;\mu)&=& \langle \textbf{v}_f\cdot\textbf{n}_f+(\xi_p+\textbf{v}_p)\cdot\textbf{n}_p,\mu\rangle_{\Gamma_{fp}}.
\end{eqnarray*}
For the well-posedness of $b_{\Gamma}$ we require that $\lambda\in\bigwedge=({\textbf{V}_p\cdot\textbf{n}_p}_{|\Gamma_{fp}})'$. According to the normal trace theorem, since $\textbf{v}_p\in\textbf{V}_p\subset H(\dive;\Omega_p)$, then $\textbf{v}_p\cdot\textbf{n}_p\in H^{-1/2}(\partial \Omega_p)$. Furthermore, since $\textbf{v}_p\cdot \textbf{n}_p=0$ on $\Gamma_{p}^N$ and $\mbox{ dis } (\Gamma_p^D,\Gamma_{fp})\geq s> 0$, then $\textbf{v}_p\cdot\textbf{n}_p\in H^{-1/2}(\Gamma_{fp})$, see, e.g. \cite{10'} Therefore we take $\bigwedge=H^{1/2}(\Gamma_{fp})$. 

The Lagrange multiplier variational formulation is: for $t\in (0,T]$, find $\textbf{u}_{f}(t)\in \textbf{V}_f$, $p_f(t)\in W_f$, $\textbf{u}_p(t)\in \textbf{V}_p$, $p_p(t)\in W_p$, $\eta_p(t)\in \textbf{X}_p$, and $\lambda(t)\in \bigwedge$, such that $p_p(0)=p_{p,0}$, $\eta_p(0)=\eta_{p,0}$, and for all $\textbf{v}_f\in\textbf{V}_f$, $w_f\in W_f$, $\textbf{v}_p\in \textbf{V}_p$, $w_p\in W_p$, $\xi_p\in \textbf{X}_p$, and $\mu\in\bigwedge$,
\begin{eqnarray}\label{f1}
a_f(\textbf{u}_f,\textbf{v}_f)+a_p^d(\textbf{u}_p,\textbf{v}_p)
+a_p^e(\eta_p,\xi_p)&+&a_{BJS}(\textbf{u}_f,\partial_t\eta_p;
\textbf{v}_f,\xi_p)\\\nonumber
+b_f(\textbf{v}_f,p_f)+b_p(\textbf{v}_p,p_p)
+ \alpha b_p(\xi_p,p_p)+b_{\Gamma} (\textbf{v}_f,\textbf{v}_p,\xi_p;\lambda)&=&(\textbf{f}_f,\textbf{v}_f)_{\Omega_f}+(\textbf{f}_p,\xi_p)
_{\Omega_p}\\\nonumber
(s_0\partial_t p_p,w_p)_{\Omega_p}-\alpha b_p(\partial_t\eta_p,w_p)
&-&b_p(\textbf{u}_p,w_p)-b_f(\textbf{u}_f,w_f)\\\label{f2}
&=& (q_f,w_f)_{\Omega_f}+(q_p,w_p)_{\Omega_p},\\\label{f3}
b_{\Gamma}(\textbf{u}_f,\textbf{u}_p,\partial_t\eta_p;\mu)&=&0.
\end{eqnarray}
Where we used the notation $\partial_t=\frac{\partial}{\partial t}$.

The assumptions on the fluid viscosity $\mu$ and the material coefficients $K$, $\lambda_p$, and $\mu_p$ imply that the bilinear forms $a_f(\cdot,\cdot)$, $a_p^d(\cdot,\cdot)$, and $a_p^e(\cdot,\cdot)$ are coercive and continuous in the appropriate norms. In particular, there exist positive constants $c^f$, $c^p$, $c^e$, $C^f$, $C^p$, $C^e$ such that:
\begin{eqnarray}\label{ex1}
	c^f\parallel \textbf{v}_f\parallel_{H^1(\Omega_f)}^2 &\leq& a_f(\textbf{v}_f,\textbf{v}_f),
	\forall \textbf{v}_f\in \textbf{V}_f,\\\label{ex2}
	a_f(\textbf{v}_f,\textbf{q}_f)&\leq& C^f\parallel\textbf{v}_f\parallel_{H^1(\Omega_f)}
	\parallel \textbf{q}_f\parallel_{H^1(\Omega_f)}, \mbox{  } \forall \textbf{v}_f, \textbf{q}_f\in\textbf{V}_f,\\\label{ex3}
	c^p\parallel\textbf{v}_p\parallel_{L^2(\Omega_p)}^2 &\leq &a_p^d(\textbf{v}_p,\textbf{v}_p),  \forall \textbf{v}_p\in \textbf{V}_p,\\\label{ex4}
	a_p^d(\textbf{v}_p,\textbf{q}_p)&\leq& C^p\parallel \textbf{v}_p\parallel_{L^2(\Omega_p)}
	\parallel \textbf{q}_p\parallel_{L^2(\Omega_p)}, \forall \textbf{v}_p, \textbf{q}_p\in \textbf{V}_p,
	\\\label{ex5}
	c^e\parallel\xi_p\parallel_{H^1(\Omega_p)}^2 &\leq& a_p^e(\xi_p,\xi_p)\forall \xi_p\in\textbf{X}_p\\
	\label{ex6}
	a_p^e(\xi_p,\zeta_p)&\leq& C^e\parallel \xi_p\parallel_{H^1(\Omega_p)}\parallel\zeta_p\parallel_{H^1(\Omega_p)} \forall \xi_p,\zeta_p\in\textbf{X}_p,
\end{eqnarray}
where (\ref{ex1})-(\ref{ex2}) and (\ref{ex5})-(\ref{ex6}) hold true thanks to Poincar\'e inequality and (\ref{ex5})-(\ref{ex6}) also relies on Korn's inequality.\\
In summary, from \cite[Corollary 3.1 , Page 7]{1}, the following result holds:
\begin{theorem}
	There exists a unique solution $(\textbf{u}_f,p_f,\textbf{u}_p,p_p,\eta_p,\lambda)\in 
	L^{\infty}(0,T; \textbf{V}_f)\times L^{\infty}(0,T;W_f)\\\times L^{\infty}(0,T;\textbf{V}_p)\times 
	W^{1,\infty}(0,T;W_p)\times W^{1,\infty}(0,T;\textbf{X}_p)\times L^{\infty}(0,T;\Lambda)$ to the problem (\ref{f1})-(\ref{f3}).
\end{theorem}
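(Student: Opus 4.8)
The plan is to read (\ref{f1})--(\ref{f3}) as a degenerate (implicit) linear evolution problem posed in Hilbert spaces and to invoke an abstract existence result of the type developed for implicit evolution equations (Showalter) for problems of the form $\frac{d}{dt}(\mathcal{E}\textbf{x})+\mathcal{A}\textbf{x}=\textbf{F}$ with $\mathcal{E}\textbf{x}(0)=\mathcal{E}\textbf{x}_0$, in which $\mathcal{E}$ is bounded, symmetric and non-negative and $\mathcal{A}$ is continuous and coercive modulo its kernel. Here the genuinely dynamic unknowns are $p_p$ and $\eta_p$, the only ones carrying a time derivative --- through $s_0\partial_t p_p$, $a_{BJS}(\textbf{u}_f,\partial_t\eta_p;\cdot,\cdot)$, $\alpha b_p(\partial_t\eta_p,\cdot)$ and $b_\Gamma(\cdot,\cdot,\partial_t\eta_p;\cdot)$ --- while $\textbf{u}_f,p_f,\textbf{u}_p$ and the multiplier $\lambda$ are constraint/algebraic variables to be recovered at each time from elliptic and saddle-point relations.

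First I would gather the unknowns into $\textbf{x}=(\textbf{u}_f,p_f,\textbf{u}_p,p_p,\eta_p,\lambda)$ on $\textbf{V}_f\times W_f\times\textbf{V}_p\times W_p\times\textbf{X}_p\times\Lambda$, define the spatial form $\mathcal{A}$ as the sum of all $t$-independent terms in (\ref{f1})--(\ref{f3}) and the ``mass'' form $\mathcal{E}$ as the part acting on $(\partial_t p_p,\partial_t\eta_p)$, and then check the hypotheses of the abstract theorem. Continuity of $\mathcal{A}$ and $\mathcal{E}$ is immediate from (\ref{ex2}), (\ref{ex4}), (\ref{ex6}), the boundedness of $s_0$, $\alpha$, $\alpha_{BJS}$ and the (normal) trace theorems used to define $\Lambda=H^{1/2}(\Gamma_{fp})$. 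Symmetry and non-negativity of $\mathcal{E}$ hold because it is built from $s_0(\cdot,\cdot)_{\Omega_p}$ and from the symmetric part of $a_{BJS}$; here one must keep careful track of the skew contributions coming from $\alpha b_p(\partial_t\eta_p,\cdot)$ and $b_\Gamma(\cdot,\cdot,\partial_t\eta_p;\cdot)$, which is exactly why one works with the combined form rather than variable by variable. For coercivity of $\mathcal{A}$, the diagonal pieces $a_f$, $a_p^d$, $a_p^e$ of (\ref{ex1}), (\ref{ex3}), (\ref{ex5}) together with the non-negativity of $a_{BJS}$ control $\textbf{u}_f$ in $H^1(\Omega_f)^d$, $\textbf{u}_p$ in $L^2(\Omega_p)^d$ and $\eta_p$ in $H^1(\Omega_p)^d$, while $p_f$, $p_p$ and $\lambda$ are recovered from three inf-sup conditions: the classical Stokes condition for $b_f$ on $\textbf{V}_f\times W_f$, the mixed Darcy condition for $b_p$ on $\textbf{V}_p\times W_p$ (which also yields control of $\nabla\cdot\textbf{u}_p$), and a condition for $b_\Gamma$ on $\textbf{V}_f\times\textbf{V}_p\times\textbf{X}_p\times\Lambda$, the last obtained by constructing, for a given $\mu\in\Lambda$, a $\textbf{v}_p\in\textbf{V}_p$ with prescribed normal trace on $\Gamma_{fp}$ and vanishing normal trace on $\Gamma_p^N$, which is possible because $\mbox{dist}(\Gamma_p^D,\Gamma_{fp})\geq s> 0$. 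A Boffi--Brezzi--Fortin-type combination of these then gives coercivity of $\mathcal{A}+\mathcal{E}$ up to its kernel, and the abstract theorem yields a solution in $L^\infty$ in time with $(p_p,\eta_p)\in W^{1,\infty}(0,T;W_p)\times W^{1,\infty}(0,T;\textbf{X}_p)$, after reducing the data by subtracting an extension of $(p_{p,0},\eta_{p,0})$.

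For uniqueness and the stated time regularity I would run an energy estimate directly on (\ref{f1})--(\ref{f3}): choosing $(\textbf{v}_f,w_f,\textbf{v}_p,w_p,\xi_p,\mu)=(\textbf{u}_f,p_f,\textbf{u}_p,p_p,\partial_t\eta_p,\lambda)$, the terms $b_f$, $b_p$, $b_\Gamma$ and $\alpha b_p(\cdot,p_p)$ telescope, the time-derivative contributions assemble into $\frac{1}{2}\frac{d}{dt}\big(s_0\parallel p_p\parallel_{\Omega_p}^2+a_p^e(\eta_p,\eta_p)\big)$ plus the non-negative BJS dissipation, and the remaining diagonal terms are coercive; Gronwall's inequality then bounds the solution by $\textbf{f}_f,\textbf{f}_p,q_f,q_p$ and the initial data in the norms of the theorem and, applied to the difference of two solutions with vanishing data, gives uniqueness. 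The same estimate, together with the assumed regularity of the data, delivers the $L^\infty$ and $W^{1,\infty}$ bounds.

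The main obstacle I expect is the coercivity step, and within it the inf-sup condition for $b_\Gamma$ with multiplier space $\Lambda=H^{1/2}(\Gamma_{fp})$, coupled with the degeneracy $s_0\geq 0$: when $s_0=0$ the pressure $p_p$ receives no coercive and no time-derivative control and must be obtained entirely through the Darcy inf-sup of $b_p$, while $b_\Gamma$ simultaneously ties together the normal traces of $\textbf{u}_f$, $\textbf{u}_p$ and $\partial_t\eta_p$ on $\Gamma_{fp}$. Reconciling these constraints on a single combined test function, so that no spurious coupling destroys the saddle-point structure, is the delicate point; the hypotheses $|\Gamma_p^D|> 0$ and $\mbox{dist}(\Gamma_p^D,\Gamma_{fp})\geq s> 0$ are precisely what make the construction possible, and it is for this part that the detailed argument of \cite{1} is required.
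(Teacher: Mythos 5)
The paper does not actually prove this theorem: it is imported verbatim from \cite[Corollary 3.1]{1} with no argument given, so there is no in-paper proof to compare against. Your sketch is, in substance, a faithful reconstruction of the strategy used in that reference: recast (\ref{f1})--(\ref{f3}) as a degenerate implicit evolution problem $\frac{d}{dt}(\mathcal{E}\textbf{x})+\mathcal{A}\textbf{x}=\textbf{F}$ in the Showalter framework, with $(p_p,\eta_p)$ as the genuinely dynamic unknowns and $(\textbf{u}_f,p_f,\textbf{u}_p,\lambda)$ resolved at each time through the coercivity bounds (\ref{ex1}), (\ref{ex3}), (\ref{ex5}) and the three inf-sup conditions for $b_f$, $b_p$ and $b_{\Gamma}$; the role you assign to $|\Gamma_p^D|>0$ and $\mbox{dist}(\Gamma_p^D,\Gamma_{fp})\geq s>0$ in constructing the $b_{\Gamma}$ inf-sup is exactly right, and your energy identity (the cancellation of $b_f$, $b_p$, $\alpha b_p$ and $b_{\Gamma}$ under the test choice $(\textbf{u}_f,p_f,\textbf{u}_p,p_p,\partial_t\eta_p,\lambda)$, leaving $\frac{1}{2}\frac{d}{dt}\bigl(s_0\parallel p_p\parallel_{\Omega_p}^2+a_p^e(\eta_p,\eta_p)\bigr)$ plus the nonnegative BJS dissipation) does check out against the stated forms.

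Two caveats. First, your argument is a program rather than a proof at the two places you yourself flag: verifying the range/maximal-monotonicity condition of the abstract theorem for the combined operator (where the off-diagonal time-derivative couplings through $a_{BJS}$, $\alpha b_p(\partial_t\eta_p,\cdot)$ and $b_{\Gamma}(\cdot,\cdot,\partial_t\eta_p;\cdot)$ must be shown not to spoil symmetry and non-negativity of $\mathcal{E}$), and the explicit construction behind the $b_{\Gamma}$ inf-sup; both are carried out in \cite{1} and you defer to it, which is acceptable here since the paper itself does the same. Second, for uniqueness your energy estimate applied to the difference of two solutions only annihilates $\textbf{u}_f$, $\textbf{u}_p$, $\eta_p$ and $\sqrt{s_0}\,p_p$; you still need to invoke the inf-sup conditions once more on the residual equations to conclude that $p_f$, $p_p$ (when $s_0=0$) and $\lambda$ also coincide. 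You use this mechanism in the existence part but should state it explicitly in the uniqueness step.
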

\subsection{Semi-discrete formulation}
Let $\cT_h^f$ and $\cT_h^p$ be shape-regular and quasi-uniform partition of $\Omega_f$ and $\Omega_p$, respectively, both consisting of affine elements with maximal element diameter $h$. The two partitions may be non-matching at the interface $\Gamma_{fp}$. For the discretization of the fluid velocity and pressure we choose finite element spaces $\textbf{V}_{f,h}\subset \textbf{V}_f$ and $W_{f,h}\subset W_f$, which are assumed to be inf-sup stable. Examples of such spaces include the MINI elements, the Taylor-Hood elements and the conforming Crouzeix-Raviart elements. For the discretization of the porous medium problem we choose $\textbf{V}_{p,h}\subset \textbf{V}_p$ and $W_{p,h}\subset W_p$ to be any of well-known inf-sup stable mixed finite element spaces, such as the Raviart-Thomas or the  Brezzi-Douglas-Marini spaces. The global spaces are:
\begin{eqnarray*}
	\textbf{V}_h:=\left\{ \textbf{v}_h=(\textbf{v}_{f,h},\textbf{v}_{p,h})\in \textbf{V}_{f,h}\times \textbf{V}_{p,h}\right\}, \mbox{        }
	W_h=\left\{ w_h=(w_{f,h},w_{p,h})\in W_{f,h}\times W_{p,h}\right\}.
\end{eqnarray*}
We employ a conforming Lagrangian finite element space $\textbf{X}_{p,h}\subset \textbf{X}_p$ to approximate the structure displacement. Note that the finite element spaces $\textbf{V}_{f,h}$, $\textbf{V}_{p,h}$ and $\textbf{X}_{p,h}$ satisfy the prescribed homogeneous boundary conditions on the external boundaries. For the discrete Lagrange multiplier space we take 
$$
\Lambda_h={\textbf{V}_{p,h}\cdot \textbf{n}_p}_{|\Gamma_{fp}}.
$$
The semi-discrete continuous-in-time problem reads: given $p_{p,h}(0)$ and $\eta_{p,h}(0)$, for $t\in(0,T]$, find $\textbf{u}_{f,h}(t)\in\textbf{V}_{f,h}$, $p_{f,h}(t)\in W_{f,h}$, 
$\textbf{u}_{p,h}(t)\in \textbf{V}_{p,h}$, $p_{p,h}(t)\in W_{p,h}$, $\eta_{p,h}(t)\in \textbf{X}_{p,h}$, and $\lambda_h(t)\in\Lambda_h$ such that for all $\textbf{v}_{f,h}\in \textbf{V}_{f,h}$, 
$w_{f,h}\in W_{f,h}$, $\textbf{v}_{p,h}\in\textbf{V}_{p,h}$, $w_{p,h}\in W_{p,h}$, $\xi_{p,h}\in \textbf{X}_{p,h}$, and $\mu_h\in\Lambda_h$, 
\begin{eqnarray}\label{fh1}\nonumber
a_f(\textbf{u}_{f,h},\textbf{v}_{f,h})+a_p^d(\textbf{u}_{p,h},\textbf{v}_{p,h})
+a_p^e(\eta_{p,h},\xi_{p,h})\\
+a_{BJS}(\textbf{u}_{f,h},\partial_t\eta_{p,h};
\textbf{v}_{f,h},\xi_{p,h})
+b_f(\textbf{v}_{f,h},p_{f,h})\\\nonumber
+b_p(\textbf{v}_{p,h},p_{p,h})
+ \alpha b_p(\xi_{p,h},p_{p,h})+b_{\Gamma} (\textbf{v}_{f,h},\textbf{v}_{p,h},\xi_{p,h};\lambda_h)&=&(\textbf{f}_{f,h},\textbf{v}_{f,h})_
{\Omega_f}+(\textbf{f}_{p,h},\xi_{p,h})
_{\Omega_p}\\\nonumber
(s_0\partial_t p_{p,h},w_{p,h})_{\Omega_p}-\alpha b_p(\partial_t\eta_{p,h},w_{p,h})
&-&b_p(\textbf{u}_{p,h},w_{p,h})-b_f(\textbf{u}_{f,h},w_{f,h})\\\label{fh2}
&=& (q_f,w_{f,h})_{\Omega_f}+(q_p,w_{p,h})_{\Omega_p},\\\label{fh3}
b_{\Gamma}(\textbf{u}_{f,h},\textbf{u}_{p,h},\partial_t\eta_{p,h};\mu_h)&=&0.
\end{eqnarray}
We will take $p_{p,h}(0)$ and $\eta_{p,h}(0)$ to be suitable projections of the initial data $p_{p,0}$ and 
$\eta_{p,0}$. 

We introduce the errors for all variables:
$$\textbf{e}_f:=\textbf{u}_{f}-\textbf{u}_{f,h}, \mbox{
 }  \textbf{e}_p:=\textbf{u}_p-\textbf{u}_{p,h}, \mbox{     } \textbf{e}_s:=\eta_p-\eta_{p,h},\mbox{   }
e_{fp}:=p_f-p_{f,h}, \mbox{    } e_{pp}:=p_p-p_{p,h} \mbox{    and    } e_{\lambda}:=\lambda-\lambda_h.$$
The following results hold cf. \cite{1}:
\begin{theorem}
	[\textbf{A-priori error estimation} ]
	There exists a unique solution $(\textbf{u}_{f,h}, p_{f,h}, \textbf{u}_{p,h},p_{p,h},\eta_{p,h},\lambda_h)$ in 
	$L^{\infty}(0,T;\textbf{V}_{f,h})\times L^{\infty}(0,T;W_{f,h})\times L^{\infty}(0,T;\textbf{V}_{p,h})\times W^{1,\infty}(0,T;W_{p,h})\times 
	W^{1,\infty}(0,T;\textbf{X}_{p,h})\times  L^{\infty}(0,T;\Lambda_h)$ of the weak formulation 
	(\ref{fh1})-(\ref{fh3}) and if the solution 
	$(\textbf{u}_{f}, p_{f}, \textbf{u}_{p},p_p,\eta_{p},\lambda)\in\textbf{V}_f\times W_f\times \textbf{V}_p\times W_p\times \textbf{X}_p\times \Lambda$ of the continuous problem (\ref{f1})-(\ref{f3}) is smooth enough, then we have:
	\begin{eqnarray}\nonumber
		\parallel \textbf{e}_f\parallel_{L^2(0,T;H^1(\Omega_f))}+
		\parallel \textbf{e}_p\parallel_{L^2(0,T;L^2(\Omega_p))}\\
		+\parallel\textbf{e}_s \parallel_{L^{\infty}(0,T;H^1(\Omega_p))}
		+\parallel e_p\parallel_{L^{\infty}(0,T;L^2(\Omega_p))}\\\nonumber
		+\parallel e_f\parallel_{L^2(0,T;L^2(\Omega_f))}
		+\parallel e_{\lambda}\parallel_{L^2(0,T;\Lambda_h)}
		\leq C(\textbf{u}_{f}, p_{f}, \textbf{u}_{p},p_{p},\eta_{p},\lambda)h^{r}, r\geq 1.
	\end{eqnarray}

	%\begin{eqnarray*}
	%	\parallel \eta_p-\eta_{p,h}\parallel_{L^{\infty}(0,T;H^1(\Omega_p))}+
	%	\sqrt{s_0}\parallel p_p-p_{p,h}\parallel_{L^{\infty}(0,T;L^2(\Omega_p))}\\
	%	+\parallel \textbf{u}_f-\textbf{u}_{f,h}\parallel_{L^2(0,T;H^1(\Omega_f))}+
	%	\parallel \textbf{u}_p-\textbf{u}_{p,h}\parallel_{L^2(0,T;L^2(\Omega_p))}\\
	%	+|(\textbf{u}_f-\partial_t\eta_p)-(\textbf{u}_{f,h}-\partial_t\eta_{p,h})|_{L^2(0,T;a_{BJS})}
	%	+\parallel\lambda-\lambda_h\parallel_{L^2(0,T;\Lambda_h)}\\
	%	+\parallel p_f-p_{f,h}\parallel_{L^2(0,T;L^2(\Omega_f))}+
	%%	\parallel p_p-p_{p,h}\parallel_{L^2(0,T;L^2(\Omega_p))}\\
	%	\leq C(\textbf{u}_{f}, p_{f}, \textbf{u}_{p},p_{p},\eta_{p},\lambda)h^{r}.
	%\end{eqnarray*}
\end{theorem}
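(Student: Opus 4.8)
The statement is quoted from \cite{1}, so a complete proof would reproduce the argument there; here I sketch the structure I would follow. The plan is the classical energy argument for mixed discretizations of Biot-type evolution problems. \emph{Step 1: well-posedness of the semi-discrete scheme.} Because the discrete multiplier space is chosen as $\Lambda_h=\textbf{V}_{p,h}\cdot\textbf{n}_p|_{\Gamma_{fp}}$, the discrete interface form $b_\Gamma$ inherits an inf--sup condition from the continuous one, and the coercivity bounds (\ref{ex1}), (\ref{ex3}), (\ref{ex5}) together with the positive semi-definiteness of $a_{BJS}$ turn the spatial part of (\ref{fh1})--(\ref{fh3}) into a well-posed saddle-point problem on $\textbf{V}_h\times W_h\times\textbf{X}_{p,h}\times\Lambda_h$. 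Eliminating the algebraic constraint (\ref{fh3}) and the pressure and multiplier unknowns through their inf--sup conditions reduces the system to a linear ODE in the remaining coefficients with a symmetric positive-definite mass matrix on the relevant subspace; Picard--Lindel\"{o}f then gives a unique solution with the stated $L^\infty$ and $W^{1,\infty}$ regularity in time.

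\emph{Step 2: error equations.} I would introduce appropriate projections --- a Stokes projection onto $\textbf{V}_{f,h}\times W_{f,h}$, a mixed (Raviart--Thomas or Brezzi--Douglas--Marini) projection onto $\textbf{V}_{p,h}\times W_{p,h}$, an elliptic elasticity projection onto $\textbf{X}_{p,h}$, and a suitable projection onto $\Lambda_h$ --- chosen so that the consistency terms produced after subtracting (\ref{fh1})--(\ref{fh3}) from (\ref{f1})--(\ref{f3}) either vanish or are controlled by standard interpolation estimates of order $h^r$. Splitting each of $\textbf{e}_f$, $\textbf{e}_p$, $\textbf{e}_s$, $e_{fp}$, $e_{pp}$, $e_\lambda$ into an interpolation part and a discrete part, one obtains the error system. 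The decisive step is the choice of test functions: take $\textbf{v}_{f,h}$, $\textbf{v}_{p,h}$, $w_{f,h}$, $w_{p,h}$, $\mu_h$ equal to the discrete parts of the corresponding errors and $\xi_{p,h}$ equal to the \emph{time derivative} of the discrete part of $\textbf{e}_s$, so that on summation the indefinite coupling terms $b_f$, $b_p$, $b_\Gamma$ cancel, the BJS term contributes nonnegatively, and the terms $(s_0\partial_t e_{pp},\cdot)$ and $a_p^e(\partial_t\textbf{e}_s,\cdot)$ assemble into $\tfrac{1}{2}\tfrac{d}{dt}\big(s_0\|e_{pp}\|_{\Omega_p}^2+a_p^e(\textbf{e}_s,\textbf{e}_s)\big)$ up to projection remainders.

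\emph{Step 3: closing the estimate.} Integrating in time over $(0,t)$, using the constants in (\ref{ex1})--(\ref{ex6}), absorbing the projection-error right-hand sides by Young's inequality, invoking the assumed approximation of the initial data $p_{p,h}(0)$ and $\eta_{p,h}(0)$, and applying Gronwall's lemma bounds the $L^\infty(0,T;\cdot)$ and $L^2(0,T;\cdot)$ norms of the discrete error parts; a triangle inequality with the interpolation estimates then yields the claimed bound with the factor $h^r$, $r\ge1$, and a constant depending on Sobolev norms of the exact solution. I expect the main obstacle to be the interface terms involving $\partial_t\eta_p$ in $a_{BJS}$ and in $b_\Gamma$: differentiating the constraint in time and controlling $\partial_t e_\lambda$ requires either additional time regularity on $\lambda$ or a discrete inf--sup argument applied to the time-differentiated error equation, and when the meshes do not match at $\Gamma_{fp}$ (so that $\Lambda_h$ is not contained in the trace space of $\textbf{V}_{f,h}$) one must further show that the resulting mortar-type consistency terms are of optimal order.
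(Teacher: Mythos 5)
The paper offers no proof of this theorem at all: it is imported verbatim from \cite{1} (``The following results hold cf. \cite{1}''), so there is no internal argument to compare yours against. Your outline correctly identifies this and reproduces the structure of the energy-method proof in that reference --- discrete well-posedness via reduction to an ODE system, error equations through suitable projections, testing with discrete error components (and with $\partial_t$ of the displacement error) so the coupling terms cancel, then Gronwall --- which is essentially the approach taken in \cite{1}. Note only that what you have written is an honest sketch rather than a proof: the genuinely delicate points you flag at the end (control of $\partial_t e_\lambda$ and the mortar consistency terms on non-matching interface meshes) are precisely where the work in \cite{1} lies, and they are asserted rather than resolved here.
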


	For $\textbf{W}_h=(\textbf{v}_{f,h},w_{f,h},\textbf{v}_{p,h},w_{p,h},\xi_{p,h},\lambda_h)\in 
	L^{\infty}(0,T;\textbf{V}_{f,h})\times L^{\infty}(0,T;W_{f,h})\times L^{\infty}(0,T;\textbf{V}_{p,h})\times W^{1,\infty}(0,T;W_{p,h})\times 
	W^{1,\infty}(0,T;\textbf{X}_{p,h})\times  L^{\infty}(0,T;\Lambda_h)$,
%\textbf{V}_{f,h}\times W_{f,h}\times \textbf{V}_{p,h}\times W_{p,h}\times \textbf{X}_{p,h}\times \Lambda_h
%$, 
we can subtract (\ref{fh1})-(\ref{fh3}) to (\ref{f1})-(\ref{f3}) to obtain the Galerkin orthogonality relation for all $t\in]0,T]$:
\begin{eqnarray*}
	a_f(\textbf{e}_f(t),\textbf{v}_{f,h}(t))+a_p^d(\textbf{e}_s(t),\xi_{p,h}(t))+a_{BJS}(\textbf{e}_f(t),
	\partial_t\textbf{e}_s(t);\textbf{v}_{f,h}(t),\xi_{p,h}(t))\\
	+b_f(\textbf{v}_{f,h}(t),e_f(t))+
	b_p(\textbf{v}_{p,h}(t),e_p(t))
	+\alpha b_p(\xi_p(t),e_p(t))\\+b_{\Gamma}(\textbf{v}_f(t),\textbf{v}_p(t),\xi_p(t),e_{\lambda}(t))+
	(s_0\partial_t e_p(t),w_{p,h}(t))\\
	-\alpha b_p(\partial_t(t) \textbf{e}_s(t),w_{p,h}(t))-b_p(\textbf{e}_p(t),w_{p,h}(t))-b_f(\textbf{e}_f(t),w_{f,h}(t))=0.
\end{eqnarray*}
\section{A-posteriori error analysis}\label{s3'}
A order to solve the Stokes-Biot model problem by efficient adaptive finite element methods, reliable and efficient a posteriori error analysis is important to provide appropriated indicators. In this section, we first define the local and global indicators (Section \ref{s3}) and then the lower and upper error bounds are derived (Sections (\ref{s4}) and (\ref{s5})).
\subsection{Residual error estimators} \label{s3}
The general philosophy of residual error estimators is to estimate an appropriate norm of the correct residual by terms that can be evaluated easier, and that involve the data at hand. To this end define the
exact element residuals:
\begin{definition} [\textbf{Exact Element Residuals}] Let $t\in]0,T]$ and\\ $\textbf{W}_h(t)=(\textbf{v}_{f,h}(t),p_{f,h}(t),\textbf{v}_{p,h}(t),
	w_{p,h}(t),\xi_{p,h}(t),\lambda_h(t))\in \textbf{V}_{f,h}\times W_{f,h}\times \textbf{V}_{p,h}\times W_{p,h}\times \textbf{X}_{p,h}\times \Lambda_h$ be an arbitrary finite element function. The exact element residuals over a triangle or tetrahedra $K\in\cT_h$ and over $E\in\cE_h(\Gamma_{fp})$ are defined for all $t\in]0,T]$ by:
\begin{eqnarray}
	\textbf{R}_{f,K}\left(\textbf{W}_h(t)\right)&=&\textbf{f}_{f}+\nabla\cdot \sigma_f(\textbf{v}_{f,h}(t),w_{f,h}(t))\\
	\textbf{R}_{p,K,1}(\textbf{W}_h(t))&=& \textbf{f}_{p}+\nabla\cdot\sigma_p(\xi_{p,h}(t),w_{p,h}(t))\\
	\textbf{R}_{p,K,2}(\textbf{W}_h(t))&=&\mu K^{-1}\textbf{v}_{p,h}(t)+\nabla w_{p,h}(t) 
	\\
	R_{f,K}(\textbf{W}_h(t))&=&q_f-\nabla\cdot \textbf{v}_{f,h}(t)\\
	R_{p,K}(\textbf{W}_h(t))&=& q_{p}-\partial_t\left(s_0 w_{p,h}(t)+\alpha\nabla\cdot \xi_{p,h}(t)\right)+\nabla\cdot \textbf{v}_{p,h}(t)\\
	R_{E,pf,1}(\textbf{W}_h(t))&=&\textbf{v}_{f,h}(t)\cdot\textbf{n}_{f,E}+
	\left(\partial_t\xi_p(t)+\textbf{v}_{p,h}(t)\right)\cdot\textbf{n}_{p,E}\\
	R_{E,pf,2}(\textbf{W}_h(t))&=&w_{p,h}+(\sigma_f(\textbf{v}_{f,h}(t),w_{f,h}(t))\textbf{n}_{f,E})\cdot\textbf{n}_{f,E}\\
	R_{E,pf,3}(\textbf{W}_h(t))&=& \sigma_f(\textbf{v}_{f,h}(t),w_{f,h}(t))\textbf{n}_{f,E}+
	\sigma_p(\xi_{p,h}(t),w_{p,h}(t))\textbf{n}_{p,E}\\
	R_{E,pf,4}(\textbf{W}_h(t))(j)&=&(\sigma_f(\textbf{v}_{f,h},w_{f,h}))\tau_{f,E,j}+\mu\alpha_{BJS}\sqrt{K_j^{-1}}\left(\textbf{v}_{f,h}-\partial_t\xi_{p,h}\right)\cdot\tau_{f,E,j}.
\end{eqnarray}
\end{definition}
As it is common, these exact residuals are replaced by some finite-dimensional approximation called approximate element residual $\textbf{r}_{\star,K}$, $r_{\star,K}$,  $\star\in\{f,p\}$, 
$R_{E,pf,l}$, $l\in\{1,2,3,4\}$.
This approximation is here achieved by projecting $\textbf{f}_f$ and $q_f$ on the space of piecewise constant functions in $\Omega_f$ and piecewise $\mathbb{P}^1$ functions in $\Omega_p$, more precisely for all $K\in\cT_h^f$ we take 
$$\textbf{f}_{f,K}=p_K(\textbf{f}_f) \mbox{  and }  q_{f,K}=P_K(q_f), \mbox{  with   } 
P_K: L^1(K)\rightarrow \mathbb{R} \mbox{  such  that } p_K(\phi)=\frac{1}{|K|}\int_K \phi(x)dx.$$ 
While for all $K\in\cT_h^p$, we take $\textbf{f}_{p,K}$ and $q_p$ as the unique element of $[\mathbb{P}^1(K)]^d$ respectively $\mathbb{P}^1(K)$ such that:
$$\int_K \textbf{f}_{p,K}(x)\cdot \textbf{q}(x)dx=\int_K \textbf{f}_{p}(x)\textbf{q}(x) dx \mbox{   }  \textbf{q}\in [\mathbb{P}^1(K)]^d$$
respectively,
$$\int_K q_{p,K}(x)q(x)dx=\int_K q_p(x)q(x)dx \forall  q\in \mathbb{P}^1(K).$$
Thereby, we define the approximate element residuals.
\begin{definition}
[\textbf{Approximate Element Residuals}]
 Let $t\in]0,T]$ and\\ $\textbf{W}_h(t)=(\textbf{v}_{f,h}(t),p_{f,h}(t),\textbf{v}_{p,h}(t),
w_{p,h}(t),\xi_{p,h}(t),\lambda_h(t))\in \textbf{V}_{f,h}\times W_{f,h}\times \textbf{V}_{p,h}\times W_{p,h}\times \textbf{X}_{p,h}\times \Lambda_h$ be an arbitrary finite element function.
Then, the approximate element residuals are defined
for all $t\in]0,T]$ by:
\begin{eqnarray}
\textbf{r}_{f,K}\left(\textbf{W}_h(t)\right)&=&\textbf{f}_{f,K}+\nabla\cdot \sigma_f(\textbf{v}_{f,h}(t),w_{f,h}(t))\\
\textbf{r}_{p,K,1}(\textbf{W}_h(t))&=& \textbf{f}_{p,K}+\nabla\cdot\sigma_p(\xi_{p,h}(t),w_{p,h}(t))\\
\textbf{r}_{p,K,2}(\textbf{W}_h(t))&=&\mu K^{-1}\textbf{v}_{p,h}(t)+\nabla w_{p,h}(t) 
\\
r_{f,K}(\textbf{W}_h(t))&=&q_{f,K}-\nabla\cdot \textbf{v}_{f,h}(t)\\
r_{p,K}(\textbf{W}_h(t))&=& q_{p,K}-\partial_t\left(s_0 w_{p,h}(t)+\alpha\nabla\cdot \xi_{p,h}(t)\right)+\nabla\cdot \textbf{v}_{p,h}(t)
%R_{E,pf,1}(\textbf{W}_h(t))&=&\textbf{v}_{f,h}(t)\cdot\textbf{n}_f+
%\left(\partial_t\xi_p(t)+\textbf{v}_{p,h}(t)\right)\cdot\textbf{n}_p\\
%R_{E,pf,2}(\textbf{W}_h(t))&=&w_{p,h}+(\sigma_f(\textbf{v}_{f,h}(t),w_{f,h}(t))\textbf{n}_f)\cdot\textbf{n}_f\\
%R_{E,pf,3}(\textbf{W}_h(t))&=& \sigma_f(\textbf{v}_{f,h}(t),w_{f,h}(t))\textbf{n}_f+
%\sigma_p(\xi_{p,h}(t),w_{p,h}(t))\textbf{n}_p\\
%R_{E,pf,4}(\textbf{W}_h(t))(j)&=&(\sigma_f(\textbf{v}_{f,h},w_{f,h}))\tau_{f,j}+\mu\alpha_{BJS}\sqrt{K_j^{-1}}\left(\textbf{v}_{f,h}-\partial_t\xi_{p,h}\right)\cdot\tau_{f,j}.
\end{eqnarray}
\end{definition}
Next, introduce the gradient jump in normal direction by

\begin{eqnarray*}
	\left\{
	\begin{array}{cccccccccccccccccccccc}
	\textbf{J}_{E,\textbf{n}_E,f}(\textbf{U}_h)&:=&[(2\mu\textbf{D}(\textbf{u}_{f,h})-p_{f,h}\textbf{I})\cdot\textbf{n}_E]_E \mbox{   if  } E\in\cE_h(\Omega_f)\\
	\textbf{J}_{E,\textbf{n}_E,p}(\textbf{U}_h)&:=&[(2\mu\textbf{D}(\eta_{p,h})-p_{p,h}\textbf{I})\cdot\textbf{n}_E]_E \mbox{   if  } E\in\cE_h(\Omega_p).
	\end{array}
	\right.
\end{eqnarray*}
where $\textbf{I}$ is the identity matrix of $\mathbb{R}^{d\times d}$.
\begin{definition}\label{dindK}
	[\textbf{Residual Error Estimators}]
	Let $\textbf{U}_h=(\textbf{u}_{f,h}, p_{f,h}, \textbf{u}_{p,h},p_{p,h},\eta_{p,h},\lambda_h)$
	 be the finite
	 element solution of the problem (\ref{fh1})-(\ref{fh3}) 
	 in 
	 $L^{\infty}(0,T;\textbf{V}_{f,h})\times L^{\infty}(0,T;W_{f,h})\times L^{\infty}(0,T;\textbf{V}_{p,h})\times W^{1,\infty}(0,T;W_{p,h})\times 
	 W^{1,\infty}(0,T;\textbf{X}_{p,h})\times  L^{\infty}(0,T;\Lambda_h)$.
	 Then, the residual error estimator is locally defined by
	 \begin{eqnarray}\label{indK}
	 	\Theta_K(\textbf{U}_h):=\left[\Theta_{K,f}^2(\textbf{U}_h)+\Theta_
	 	{K,p}^2(\textbf{U}_h)+\Theta_{K,pf}^2(\textbf{U}_h)\right]^{\frac{1}{2}},
	 \end{eqnarray}
	 where 
	 \begin{eqnarray}\nonumber
	 	\Theta_{K,f}^2(\textbf{U}_h):=h_K^2\parallel \textbf{r}_{f,K}(\textbf{U}_h)\parallel_{L^{\infty}(0,T;L^2(K))}^2+\parallel r_{f,K}(\textbf{U}_h)\parallel_{L^{\infty}(0,T;L^2(K))}^2\\\label{indK1}
	 	+
	 	\displaystyle\sum_{E\in\cE_h(\Omega_f)}h_E\parallel\textbf{J}_{E,\textbf{n}_E,f}(\textbf{U}_h)\parallel_{L^{\infty}(0,T;L^2(E))}^2,
	 \end{eqnarray}
	 \begin{eqnarray}\nonumber
	 	\Theta_{K,p}^2(\textbf{U}_h):=
	 	h_K^2\left(\parallel \textbf{r}_{p,K,1}(\textbf{U}_h)\parallel_{L^{\infty}(0,T;L^2(K))}^2+
	 	\parallel \textbf{r}_{p,K,2}(\textbf{U}_h)\parallel_{L^{\infty}(0,T;L^2(K))}^2
	 	\right)
	 	\\\label{indK2}
	 	+h_K^2\parallel\curl [ \textbf{r}_{p,K,2}(\textbf{U}_h)] \parallel_{L^{\infty}(0,T;L^2(K))}^2
	 	+\parallel r_{p,K}(\textbf{U}_h) \parallel_{L^{\infty}(0,T;L^2(K))}^2\\\nonumber
	 	+\displaystyle\sum_{E\in\cE_h(\Omega_p)}h_E\parallel\textbf{J}_{E,\textbf{n}_E,p}(\textbf{U}_h)\parallel_{L^{\infty}(0,T;L^2(E))}^2
	 \end{eqnarray}
	 and 
	 \begin{eqnarray}\nonumber
	 	\Theta_{K,pf}^2(\textbf{U}_h):=
	 	\displaystyle\sum_{E\in\cE_h(\Gamma_{pf})}h_E\parallel R_{E,pf,1}(\textbf{U}_h)\parallel_{L^{\infty}
	 		(0,T;L^2(E))}^2\\\nonumber
 		+\displaystyle\sum_{E\in\cE_h(\Gamma_{pf})}h_E\parallel R_{E,pf,2}(\textbf{U}_h)\parallel_{L^{\infty}
 			(0,T;L^2(E))}^2\\\label{indK3}
 		+\displaystyle\sum_{E\in\cE_h(\Gamma_{pf})}h_E\parallel R_{E,pf,3}(\textbf{U}_h)\parallel_{L^{\infty}
 			(0,T;L^2(E))}^2\\\nonumber
 		+\displaystyle\sum_{E\in\cE_h(\Gamma_{pf})}h_E\left\{\displaystyle\sum_{j=1}^{d-1}
 		\parallel R_{E,pf,4}(j) \parallel_{L^{\infty}(0,T,L^2(E))}^2 \right\}.
	 \end{eqnarray}
	 The global residual error estimator is given by
	 \begin{eqnarray}\label{indKg}
	 	\Theta(\textbf{U}_h):=\left[\displaystyle\sum_{K\in\cT_h}\Theta_K(\textbf{U}_h)^2\right]^{\frac{1}{2}}.
	 \end{eqnarray}
 \end{definition}
Furthermore denote the local approximation terms by

\begin{eqnarray}
	\zeta_K^2:=\zeta_{K,f}^2+\zeta_{K,p}^2,
\end{eqnarray}
where
\begin{eqnarray*}
	\zeta_{K,f}^2:=
	h_K^2\parallel \textbf{R}_{f,K}(\textbf{U}_h)-\textbf{r}_{f,K}(\textbf{U}_h)\parallel_{L^{\infty}(0,T;L^2(K))}^2\\
	+h_K^2
	\parallel R_{f,K}(\textbf{U}_h)-r_{f,K}(\textbf{U}_h)\parallel_{L^{\infty}(0,T,L^2(K))}^2 \mbox{  if  } K\in\cT_h^f,
	\end{eqnarray*}
and
\begin{eqnarray*}
	\zeta_{K,p}^2:=
	h_K^2\parallel \textbf{R}_{p,K,1}(\textbf{U}_h)-\textbf{r}_{p,K,1}(\textbf{U}_h)\parallel_{L^{\infty}(0,T;L^2(K))}^2\\
	+h_K^2
	\parallel R_{p,K}(\textbf{U}_h)-r_{p,K}(\textbf{U}_h)\parallel_{L^{\infty}(0,T,L^2(K))}^2 \mbox{  if  } K\in\cT_h^p.
\end{eqnarray*}
The global approximation term is defined by
\begin{eqnarray}
	\zeta:=\left[\displaystyle\sum_{K\in\cT_h}\zeta_K^2\right]^{\frac{1}{2}}.
\end{eqnarray}
\begin{remark}
The residual character of each term on the right-hand sides of (\ref{indK})-(\ref{indK3}) is quite clear 
since if $\textbf{U}_h$ would be the exact solution of (\ref{stokes1})-{\ref{i3}}, then they would vanish.
\end{remark}
\subsection{Analytical tools}
\subsubsection{\textbf{Inverse inequalities}}
 In order to derive the lower error bounds, we proceed similarly as in \cite{carstensenandall} and 
\cite{Ca:97} (see also \cite{15}), by applying
inverse inequalities, and the localization technique based on simplex-bubble and face-bubble functions. To this end, we 
recall some notation and introduce further preliminary results. Given $K\in \mathcal{T}_h$, and 
$E\in \cE(K)$,
we let $b_K$ and $b_E$ be the usual simplexe-bubble and face-bubble 
functions respectively (see (1.5) and (1.6) in \cite{verfurth:96b}). In particular, $b_K$ satisfies 
$b_K\in \mathbb{P}^3(K)$, $supp(b_K)\subseteq K$, $b_K=0 \mbox{ on } \partial K$, and $0\leq b_K\leq 1\mbox{ on } K $.
Similarly, $b_E\in \mathbb{P}^2(K)$, $supp(b_E)\subseteq 
\omega_E:=\left\{K'\in \mathcal{T}_h:  E\in\cE (K')\right\}$, 
$b_E=0\mbox{  on  } \partial K\smallsetminus E$ and $0\leq b_E\leq 1\mbox{ in } \omega_E$.
We also recall from \cite{verfurth:94a} that, given $k\in\mathbb{N}$, there exists an extension operator
$L: C(E)\longrightarrow C(K)$ that satisfies $L(p)\in \mathbb{P}^k(K)$ and $L(p)_{|E}=p, \forall p\in \mathbb{P}^k(E)$.
A corresponding vectorial version of $L$, that is, the componentwise application of $L$, is denoted by 
$\textbf{L}$. Additional properties of $b_K$, $b_E$ and $L$ are collected in the following lemma (see \cite{verfurth:94a}):

\begin{lemma}
	Given $k\in \mathbb{N}^*$, there exist positive constants depending only on $k$ and shape-regularity of the triangulations 
	(minimum angle condition), such that for each simplexe $K$ and $E\in \cE(K)$ there hold
	\begin{eqnarray}\label{cl1}
	\parallel \phi \parallel_{K}&\lesssim&\parallel \phi b_K^{1/2}\parallel_{K}\lesssim
	\parallel \phi\parallel_{K}, \forall \phi\in \mathbb{P}^k(K)\\\label{cl2}
	|\phi b_K|_{1,K}&\lesssim&  h_K^{-1}\parallel \phi \parallel_{K}, \forall \phi\in \mathbb{P}^k(K)\\\label{cl3}
	\parallel \psi\parallel_{E}&\lesssim&\parallel b_E^{1/2}\psi\parallel_{E}\lesssim \parallel \psi\parallel_{E},
	\forall \psi\in \mathbb{P}^k(E)\\\label{cl4}
	\parallel L(\psi)\parallel_{K} +h_E|L(\psi)|_{1,K}&\lesssim& h_E^{1/2}\parallel \psi\parallel_{E}
	\forall \psi\in \mathbb{P}^k(E)
	\end{eqnarray}
\end{lemma}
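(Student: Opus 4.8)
The plan is to reduce all four estimates to a single fixed reference configuration and then invoke equivalence of norms on finite-dimensional polynomial spaces; this is exactly what makes the constants depend only on $k$ and, through the geometric scalings, on shape-regularity. Fix a reference simplex $\hat K$ with a reference face $\hat E\subset\partial\hat K$, and let $\hat b_{\hat K}$, $\hat b_{\hat E}$ be the corresponding reference bubbles, so that for each $K\in\cT_h$ and $E\in\cE(K)$ the functions $b_K,b_E$ are the pullbacks of $\hat b_{\hat K},\hat b_{\hat E}$ under the affine bijection $F_K:\hat K\to K$, whose constant Jacobian I denote $B_K$. The minimum-angle condition yields the standard bounds $\parallel B_K\parallel\lesssim h_K$, $\parallel B_K^{-1}\parallel\lesssim h_K^{-1}$, $|\det B_K|\sim h_K^{d}$, and $|E|\sim h_E^{d-1}$ with $h_E\sim h_K$; under the pullback one has $\parallel v\parallel_K\sim|\det B_K|^{1/2}\parallel\hat v\parallel_{\hat K}$, $|v|_{1,K}\sim|\det B_K|^{1/2}\parallel B_K^{-1}\parallel\,|\hat v|_{1,\hat K}$, and $\parallel w\parallel_E\sim h_E^{(d-1)/2}\parallel\hat w\parallel_{\hat E}$. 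It therefore suffices to prove each inequality on $\hat K$ (resp. $\hat E$) and then transport the geometric factors.

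For (\ref{cl1}): on $\hat K$ the map $\hat\phi\mapsto\parallel\hat\phi\,\hat b_{\hat K}^{1/2}\parallel_{\hat K}$ is a norm on the finite-dimensional space $\mathbb{P}^{k}(\hat K)$ — it is plainly a seminorm, and it vanishes only if $\hat\phi=0$, since $\hat b_{\hat K}>0$ in the interior of $\hat K$ — hence it is comparable to $\parallel\hat\phi\parallel_{\hat K}$ with constants $c(k),C(k)$; the upper bound also follows directly from $0\le\hat b_{\hat K}\le1$. Pulling back, the factor $|\det B_K|^{1/2}$ cancels from both sides and (\ref{cl1}) follows with $h_K$-independent constants. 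For (\ref{cl2}) one has $\hat\phi\,\hat b_{\hat K}\in\mathbb{P}^{k+3}(\hat K)$, a fixed finite-dimensional space, so $|\hat\phi\,\hat b_{\hat K}|_{1,\hat K}\lesssim\parallel\hat\phi\,\hat b_{\hat K}\parallel_{\hat K}\lesssim\parallel\hat\phi\parallel_{\hat K}$; transporting, the left side carries $|\det B_K|^{1/2}\parallel B_K^{-1}\parallel\lesssim|\det B_K|^{1/2}h_K^{-1}$ while the right side carries only $|\det B_K|^{1/2}$, giving the factor $h_K^{-1}$.

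Estimate (\ref{cl3}) is proved exactly as (\ref{cl1}) but on $\hat E$: the restriction of $\hat b_{\hat E}$ to $\hat E$ is positive on its relative interior, so $\hat\psi\mapsto\parallel\hat b_{\hat E}^{1/2}\hat\psi\parallel_{\hat E}$ is a norm on $\mathbb{P}^{k}(\hat E)$, hence equivalent to $\parallel\hat\psi\parallel_{\hat E}$, and the surface factor $h_E^{(d-1)/2}$ cancels on pullback. For (\ref{cl4}): since $\hat L(\hat\psi)\in\mathbb{P}^{k}(\hat K)$ depends linearly and injectively on $\hat\psi\in\mathbb{P}^{k}(\hat E)$ (because $\hat L(\hat\psi)$ restricts to $\hat\psi$ on $\hat E$), both $\hat\psi\mapsto\parallel\hat L(\hat\psi)\parallel_{\hat K}$ and $\hat\psi\mapsto|\hat L(\hat\psi)|_{1,\hat K}$ are bounded by $\parallel\hat\psi\parallel_{\hat E}$ by norm equivalence on $\mathbb{P}^{k}(\hat E)$. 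Transporting and using $h_E\sim h_K$, $\parallel L(\psi)\parallel_K\sim h_K^{d/2}\parallel\hat L(\hat\psi)\parallel_{\hat K}\lesssim h_K^{d/2}\parallel\hat\psi\parallel_{\hat E}\sim h_K^{d/2}h_E^{-(d-1)/2}\parallel\psi\parallel_E\sim h_E^{1/2}\parallel\psi\parallel_E$, and similarly $h_E|L(\psi)|_{1,K}\lesssim h_E\,h_K^{d/2-1}\parallel\hat\psi\parallel_{\hat E}\sim h_E^{1/2}\parallel\psi\parallel_E$, which together give (\ref{cl4}).

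The only delicate point is the bookkeeping of the powers of $h_K$ and $h_E$ across the dimension drop $d\to d-1$ on faces, and invoking the minimum-angle condition exactly where it is needed (to obtain $\parallel B_K^{-1}\parallel\lesssim h_K^{-1}$ and $h_E\sim h_K$); the substantive ingredient is merely the compactness argument behind equivalence of norms on the fixed spaces $\mathbb{P}^{k}(\hat K)$, $\mathbb{P}^{k+3}(\hat K)$, $\mathbb{P}^{k}(\hat E)$, which is what forces the constants to depend on $k$. Since the statement is classical, one may alternatively simply cite \cite{verfurth:94a,verfurth:96b}.
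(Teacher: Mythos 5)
Your proof is correct, and it is the standard scaling/norm-equivalence argument; the paper itself offers no proof of this lemma at all, simply pointing to \cite{verfurth:94a}, and your reduction to the reference simplex with equivalence of norms on the fixed finite-dimensional spaces is exactly the argument found in that reference (which you also note can simply be cited, as the paper does). The only cosmetic slip is the degree bookkeeping: the element bubble is the product of the $d+1$ barycentric coordinates, so $\phi b_K\in\mathbb{P}^{k+d+1}(K)$, i.e.\ $\mathbb{P}^{k+4}$ for tetrahedra rather than $\mathbb{P}^{k+3}$ — immaterial to the argument, since the space is still fixed and finite-dimensional.
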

%\item \textbf{Continuous trace inequality}
\begin{lemma} \label{lemd1}
	(\textbf{Continuous trace inequality})
	There exists a positive constant  $\beta_1> 0$ depending only on  $\sigma_0$ such that 
	\begin{eqnarray}
	\parallel \textbf{v}\parallel_{\partial K}^2&\leqslant &\beta_1 \parallel\textbf{v}\parallel_{K}
	\parallel\textbf{v}\parallel_{1,K}, \mbox{  } \forall K\in \mathcal{T}_h, \forall \textbf{v}\in [H^1(K)]^d.
	\end{eqnarray}
\end{lemma}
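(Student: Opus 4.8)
The plan is to obtain the bound directly on the simplex $K$ via the divergence theorem, without passing through a reference element. Fix the incenter $\textbf{x}_K$ of $K$ and introduce the vector field $\textbf{q}(\textbf{x}):=\textbf{x}-\textbf{x}_K$. This field enjoys three properties that drive the whole argument: $\nabla\cdot\textbf{q}\equiv d$; $|\textbf{q}(\textbf{x})|\leqslant h_K$ for every $\textbf{x}\in\overline{K}$; and on each face $F$ of $K$ with outward unit normal $\textbf{n}_F$ one has $\textbf{q}\cdot\textbf{n}_F\equiv\rho_K$, where $\rho_K$ is the inradius of $K$ (the distance from $\textbf{x}_K$ to the hyperplane supporting $F$). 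The last property is the essential one: it converts $\parallel\textbf{v}\parallel_{\partial K}^2$ into a boundary integral whose sign is controlled on all of $\partial K$ simultaneously, so that it can be handled by integration by parts.

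Applying the divergence theorem to $|\textbf{v}|^2\textbf{q}$ — legitimate for $\textbf{v}\in[H^1(K)]^d$, since then $|\textbf{v}|^2\in W^{1,1}(K)$ and $\textbf{v}|_{\partial K}\in[L^2(\partial K)]^d$ by the additive trace theorem — I would get
\begin{eqnarray*}
\rho_K\parallel\textbf{v}\parallel_{\partial K}^2&=&\int_{\partial K}|\textbf{v}|^2(\textbf{q}\cdot\textbf{n})\,ds\\
&=&\int_K\Big(d\,|\textbf{v}|^2+\nabla(|\textbf{v}|^2)\cdot\textbf{q}\Big)\,dx .
\end{eqnarray*}
Writing $\nabla(|\textbf{v}|^2)=2\sum_i v_i\nabla v_i$ and applying the Cauchy--Schwarz inequality, first over the component index and then in $L^2(K)$, together with $|\textbf{q}|\leqslant h_K$, bounds the second term by $2h_K\parallel\textbf{v}\parallel_K|\textbf{v}|_{1,K}$. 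Dividing by $\rho_K$, using the shape-regularity estimate $h_K/\rho_K\leqslant\sigma_0$, and absorbing the lower-order term through $\parallel\textbf{v}\parallel_K^2\leqslant\parallel\textbf{v}\parallel_K\parallel\textbf{v}\parallel_{1,K}$ together with $|\textbf{v}|_{1,K}\leqslant\parallel\textbf{v}\parallel_{1,K}$, one reaches $\parallel\textbf{v}\parallel_{\partial K}^2\leqslant\beta_1\parallel\textbf{v}\parallel_K\parallel\textbf{v}\parallel_{1,K}$ with $\beta_1$ depending only on $\sigma_0$ (and $d$).

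The only step that is not pure calculus is the geometric one in the first paragraph: producing a single field whose normal component stays bounded below by a fixed positive quantity on \emph{every} face of $K$ at once, and relating that quantity to $h_K$ through the minimum-angle condition; this is precisely where the minimum-angle/shape-regularity hypothesis is used, and I expect it to be the main (though mild) obstacle. An alternative is to prove the multiplicative trace inequality on the fixed reference simplex $\widehat{K}$ — by the identical divergence-theorem computation, where the constant is now absolute — and then pull it back to $K$ through the affine map $F_K$; there the delicate point shifts to keeping track of the powers of $h_K$ carried by the change-of-variables factors, again closed off by shape-regularity. I would present the first route, since it keeps the dependence of $\beta_1$ on $\sigma_0$ transparent.
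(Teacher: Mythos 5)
The paper states this lemma without proof, so there is nothing to compare against; on its merits, your divergence-theorem argument with the field $\textbf{q}(\textbf{x})=\textbf{x}-\textbf{x}_K$ is the standard route, and the identity
\begin{equation*}
r_K\parallel \textbf{v}\parallel_{\partial K}^2=\int_K\left(d\,|\textbf{v}|^2+\nabla\left(|\textbf{v}|^2\right)\cdot\textbf{q}\right)dx
\end{equation*}
is correct. (Note only that in this paper's notation $\rho_K=2r_K$ denotes the in-diameter, so $\textbf{q}\cdot\textbf{n}_F$ equals the inradius $r_K=\rho_K/2$ rather than $\rho_K$ --- a harmless factor of $2$.)

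The genuine gap is in your last step. After dividing by $r_K$ you are left with
\begin{equation*}
\parallel\textbf{v}\parallel_{\partial K}^2\leqslant \frac{d}{r_K}\parallel\textbf{v}\parallel_K^2+\frac{2h_K}{r_K}\parallel\textbf{v}\parallel_K\,|\textbf{v}|_{1,K},
\end{equation*}
and while $h_K/r_K$ is controlled by shape regularity, the factor $1/r_K$ in the first term is not: it blows up as the mesh is refined. Writing $\parallel\textbf{v}\parallel_K^2\leqslant\parallel\textbf{v}\parallel_K\parallel\textbf{v}\parallel_{1,K}$ does not remove it. In fact no argument can close this step, because the inequality as stated is false with a constant depending only on shape regularity: for $\textbf{v}$ equal to a constant unit vector, $\parallel\textbf{v}\parallel_{\partial K}^2=|\partial K|\sim h_K^{d-1}$ while $\parallel\textbf{v}\parallel_K\parallel\textbf{v}\parallel_{1,K}=|K|\sim h_K^{d}$, which forces $\beta_1\geqslant c\,h_K^{-1}$. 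What your computation actually proves --- and what is the correct, scale-invariant form used in residual a posteriori analysis --- is
\begin{equation*}
\parallel\textbf{v}\parallel_{\partial K}^2\leqslant C(\sigma_0,d)\,\parallel\textbf{v}\parallel_K\left(h_K^{-1}\parallel\textbf{v}\parallel_K+|\textbf{v}|_{1,K}\right).
\end{equation*}
You should either state and prove the lemma in this corrected form, or observe that the purely multiplicative bound holds only on a fixed element (e.g.\ the reference simplex), where $r_K$ is bounded below by an absolute constant.
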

\subsubsection{\textbf{Cl\'ement interpolation operator}}
In order to derive the upper error bounds, 
we introduce the  Cl\'ement interpolation operator 
$\mbox{I}_{\mbox{Cl}}^0: H_0^1(\Omega)\longrightarrow \mathcal{P}_c^b(\mathcal{T}_h)$ that approximates optimally non-smooth 
functions by continuous piecewise linear functions:
\begin{eqnarray*}
	\mathcal{P}_c^b(\mathcal{T}_h):=\left\{v\in C^0(\overline{\Omega}): \mbox{   }
	v_{|K} \in \mathbb{P}^1(K), \mbox{   } \forall K\in\mathcal{T}_h \mbox{  and  } 
	v=0 \mbox{  on  } \partial \Omega\right\}.
\end{eqnarray*}
In addition, we will make use of a vector valued version 
of $\mbox{I}_{\mbox{Cl}}^0$, that is,\\ $\textbf{I}_{\mbox{Cl}}^0: [H_0^1(\Omega)]^d\longrightarrow [\mathcal{P}_c^b(\mathcal{T}_h)]^d $, which 
is defined componentwise by $\mbox{I}_{\mbox{Cl}}^0.$ The following lemma establishes the local approximation properties of 
$\mbox{I}_{\mbox{Cl}}^0$ (and hence of $\textbf{I}_{\mbox{Cl}}^0$), for a proof see \cite[Section 3]{clement:75}.
\begin{lemma}\label{clement}
	There exist constants $C_1, C_2> 0$, independent of $h$, such that for all $v\in H_0^1(\Omega)$ there hold 
	\begin{eqnarray}
	\parallel v-\mbox{I}_{Cl}^0(v)\parallel_{K}&\leq& C_1h_K\parallel v\parallel_{1,\Delta(K)} 
	\mbox{   } \forall K\in \mathcal{T}_h,   \hspace*{0.2cm}\mbox{  and  }\\
	\parallel v-\mbox{I}_{Cl}^0(v)\parallel_{E}&\leq& C_2h_E^{1/2}\parallel v\parallel_{1,\Delta(E)}
	\mbox{   } \forall E\in\cE_h,
	\end{eqnarray}
	where $\Delta(K):=\cup\left\{K'\in \mathcal{T}_h: K'\cap K\neq\emptyset\right\}$ and  
	$\Delta (E):=\cup \left\{K'\in \mathcal{T}_h: K'\cap E\neq\emptyset\right\}$.
\end{lemma}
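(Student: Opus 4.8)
The plan is to reproduce Cl\'ement's construction together with the accompanying scaling argument. For each Lagrange vertex $z$ of $\mathcal{T}_h$ let $\omega_z:=\bigcup\{K'\in\mathcal{T}_h:\,z\in\overline{K'}\}$ be the nodal patch, and let $\pi_z:L^2(\omega_z)\to\mathbb{P}^1(\omega_z)$ denote the $L^2(\omega_z)$-orthogonal projection onto affine functions. Define $\mbox{I}_{Cl}^0 v:=\sum_z (\pi_z v)(z)\,\varphi_z$, where $\{\varphi_z\}$ is the continuous piecewise-linear nodal basis of $\mathcal{P}_c^b(\mathcal{T}_h)$ and the vertices lying on $\partial\Omega$ are discarded, so that $\mbox{I}_{Cl}^0 v=0$ on $\partial\Omega$, consistent with $\mbox{I}_{Cl}^0:H_0^1(\Omega)\to\mathcal{P}_c^b(\mathcal{T}_h)$. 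The two structural facts needed are: (i) each $\pi_z$ is $L^2$-stable, $\parallel\pi_z v\parallel_{0,\omega_z}\leq\parallel v\parallel_{0,\omega_z}$, and reproduces $\mathbb{P}^1(\omega_z)$; (ii) by shape-regularity the number of elements meeting any given element is bounded, $\parallel\varphi_z\parallel_{\infty}\leq 1$, and $\sum_z\varphi_z\equiv 1$ on each $K$ not touching $\partial\Omega$.

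First I would establish local stability: for $K\in\mathcal{T}_h$,
\begin{eqnarray*}
	\parallel\mbox{I}_{Cl}^0 v\parallel_{0,K}\lesssim\parallel v\parallel_{0,\Delta(K)},\qquad
	|\mbox{I}_{Cl}^0 v|_{1,K}\lesssim h_K^{-1}\parallel v\parallel_{0,\Delta(K)}.
\end{eqnarray*}
This follows by writing $\mbox{I}_{Cl}^0 v|_K=\sum_{z\in\overline{K}}(\pi_z v)(z)\,\varphi_z$, estimating the finitely many nodal values by $|(\pi_z v)(z)|\leq\parallel\pi_z v\parallel_{\infty,\omega_z}\lesssim |K|^{-1/2}\parallel v\parallel_{0,\omega_z}$ (a scaled inverse estimate on the finite-dimensional space $\mathbb{P}^1(\omega_z)$), and using $\parallel\varphi_z\parallel_{0,K}\lesssim h_K^{d/2}$, $|\varphi_z|_{1,K}\lesssim h_K^{d/2-1}$ together with $|K|\simeq h_K^d$.

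Next come the error bounds. Since $\mbox{I}_{Cl}^0$ reproduces global affine functions in the interior, for any $p\in\mathbb{P}^1$ one has $v-\mbox{I}_{Cl}^0 v=(v-p)-\mbox{I}_{Cl}^0(v-p)$ on $K$. Choosing $p$ to be the first-order averaged Taylor polynomial of $v$ on the connected patch $\Delta(K)$ (whose diameter is $\simeq h_K$ by shape-regularity) and invoking the Bramble--Hilbert lemma gives $\parallel v-p\parallel_{0,\Delta(K)}\lesssim h_K|v|_{1,\Delta(K)}\leq h_K\parallel v\parallel_{1,\Delta(K)}$. Applying the local stability bound to $v-p$ then yields the first inequality with $C_1$ depending only on shape-regularity. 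For the second, I would use the scaled trace inequality $\parallel w\parallel_{0,E}^2\lesssim h_E^{-1}\parallel w\parallel_{0,K}^2+h_E|w|_{1,K}^2$ with $w=v-\mbox{I}_{Cl}^0 v$: the first term is controlled by the square of the already-proven $L^2$ estimate, and for the second I bound $|v-\mbox{I}_{Cl}^0 v|_{1,K}\leq |v|_{1,K}+|\mbox{I}_{Cl}^0 v|_{1,K}\lesssim\parallel v\parallel_{1,\Delta(K)}+h_K^{-1}\parallel v-p\parallel_{0,\Delta(K)}\lesssim\parallel v\parallel_{1,\Delta(K)}$, giving $\parallel v-\mbox{I}_{Cl}^0 v\parallel_{0,E}\lesssim h_E^{1/2}\parallel v\parallel_{1,\Delta(E)}$ since $\Delta(E)\subseteq\Delta(K)$.

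The main obstacle is ensuring that every hidden constant depends only on the minimum-angle (shape-regularity) parameter and not on $h$ or on the particular patch geometry. This is handled by the usual scaling/compactness argument: map each element and each nodal patch to one of finitely many reference configurations admissible under the minimum-angle bound, prove the estimates there, where they reduce to finite-dimensional norm equivalences and the classical Bramble--Hilbert lemma on a fixed domain, and scale back while tracking the powers of $h_K$; shape-regularity simultaneously guarantees $\mathrm{diam}(\Delta(K))\simeq h_{K'}\simeq h_K$ for all $K'\subseteq\Delta(K)$ and uniformly bounded overlap of the patches. This is precisely the content of \cite[Section 3]{clement:75}, to which we refer for the full details. $\blacksquare$
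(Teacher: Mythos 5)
The paper does not actually prove this lemma; it states it and refers to \cite[Section 3]{clement:75}, so the only meaningful comparison is with the classical argument you are reconstructing. Your sketch follows that standard route (nodal-patch $L^2$-projections, local stability via scaled inverse estimates on $\mathbb{P}^1$, Bramble--Hilbert on the patch, a scaled trace inequality for the edge bound), and the bookkeeping of constants through shape regularity is handled correctly.

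The one step that, as written, would fail is the identity $v-\mbox{I}_{Cl}^0 v=(v-p)-\mbox{I}_{Cl}^0(v-p)$ on an element $K$ whose patch touches $\partial\Omega$. Because you discard the boundary vertices in order to force $\mbox{I}_{Cl}^0 v=0$ on $\partial\Omega$, the operator does not reproduce affine functions on such elements, so subtracting an arbitrary $p\in\mathbb{P}^1$ is not free there; you acknowledge reproduction only ``in the interior'' but then use it on every $K$. The standard repair uses the hypothesis $v\in H_0^1(\Omega)$: on a patch $\Delta(K)$ meeting $\partial\Omega$, the function $v$ vanishes on a boundary portion of positive measure, so a scaled Friedrichs--Poincar\'e inequality gives $\parallel v\parallel_{0,\Delta(K)}\lesssim h_K\,|v|_{1,\Delta(K)}$ directly, and the local stability bound applied to $v$ itself (i.e.\ taking $p=0$) then yields both estimates on boundary elements. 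With that case added, your argument is complete and coincides with the content of the cited reference.
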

\subsubsection{\textbf{Helmholtz decomposition}}
\begin{lemma}(\cite{HJA:2017})\label{helmoltz}
	There exists $C_p> 0$ such that every $\textbf{v}_p\in \textbf{H}(\dive;\Omega_p)$ 
	can be decomposed as $\textbf{v}_p=\textbf{w}+\curl \beta$, where $\textbf{w}\in [H^1(\Omega_p)]^d$, 
	$\beta\in H^1(\Omega_p)$, $\int_{\Omega_p} \beta (x)dx=0$ and 
	\begin{eqnarray}\label{esth}
		\parallel \textbf{w}\parallel_{1,\Omega_p}+\parallel \beta \parallel_{1,\Omega_p}\leq C_p 
		\parallel \textbf{v}_p\parallel_{\textbf{H}(\dive;\Omega_p)}.
	\end{eqnarray}
\end{lemma}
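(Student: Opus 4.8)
The statement to prove is a standard Helmholtz-type decomposition for $\textbf{H}(\dive;\Omega_p)$ fields. Let me sketch how I would establish it. The plan is to reduce the construction to solving two auxiliary elliptic problems: first a scalar Poisson problem to capture the divergence, and then the curl of a stream-type potential to absorb the remaining divergence-free part. I will distinguish the cases $d=2$ and $d=3$ only where the $\curl$ operator requires it; the argument is otherwise identical.

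First I would fix $\textbf{v}_p\in\textbf{H}(\dive;\Omega_p)$ and solve the auxiliary problem: find $\varphi\in H^1(\Omega_p)$ (with a suitable boundary condition adapted to $\Gamma_p^N$ and $\Gamma_p^D$, matching the normal-trace constraint $\textbf{v}_p\cdot\textbf{n}_p=0$ on $\Gamma_p^N$) such that $\Delta\varphi=\nabla\cdot\textbf{v}_p$ in $\Omega_p$. Since $\nabla\cdot\textbf{v}_p\in L^2(\Omega_p)$ and $\Omega_p$ is a bounded Lipschitz (indeed polygonal/polyhedral) domain, Lax--Milgram gives a unique solution with the a priori bound $\|\varphi\|_{1,\Omega_p}\lesssim\|\nabla\cdot\textbf{v}_p\|_{\Omega_p}$; elliptic regularity on the (convex or smooth enough) subdomain then upgrades this to $\|\nabla\varphi\|_{1,\Omega_p}\lesssim\|\nabla\cdot\textbf{v}_p\|_{\Omega_p}$. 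Set $\textbf{w}_0:=\nabla\varphi\in[H^1(\Omega_p)]^d$. Then $\textbf{z}:=\textbf{v}_p-\textbf{w}_0$ satisfies $\nabla\cdot\textbf{z}=0$ in $\Omega_p$ together with the appropriate homogeneous normal-trace condition, so $\textbf{z}$ is a solenoidal field.

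Second, I would represent the divergence-free field $\textbf{z}$ as a curl: because $\textbf{z}\in\textbf{H}(\dive;\Omega_p)$ with $\nabla\cdot\textbf{z}=0$ and $\Omega_p$ has trivial relevant topology (simply connected, as assumed for the connected subregion — the extension to multiply-connected $\Omega_p$ being the standard caveat), there exists a potential $\widetilde\beta$ with $\textbf{z}=\curl\widetilde\beta$ and $\|\widetilde\beta\|_{1,\Omega_p}\lesssim\|\textbf{z}\|_{\dive,\Omega_p}$; this is the classical result on vector potentials (e.g. Girault--Raviart). In $d=3$, $\widetilde\beta$ is vector-valued and one may impose $\nabla\cdot\widetilde\beta=0$ to fix the gauge; in $d=2$, $\widetilde\beta$ is the scalar stream function. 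Then I normalize: set $\beta:=\widetilde\beta-\frac{1}{|\Omega_p|}\int_{\Omega_p}\widetilde\beta\,dx$ so that $\int_{\Omega_p}\beta\,dx=0$, which does not change $\curl\widetilde\beta=\curl\beta$, and by a Poincaré--Wirtinger inequality the normalization only improves (or leaves unchanged up to a constant) the $H^1$ bound. Finally set $\textbf{w}:=\textbf{w}_0$, giving $\textbf{v}_p=\textbf{w}+\curl\beta$ with $\textbf{w}\in[H^1(\Omega_p)]^d$, $\beta\in H^1(\Omega_p)$, $\int_{\Omega_p}\beta\,dx=0$, and, collecting the two bounds together with $\|\textbf{z}\|_{\dive,\Omega_p}\le\|\textbf{v}_p\|_{\dive,\Omega_p}+\|\textbf{w}_0\|_{\dive,\Omega_p}\lesssim\|\textbf{v}_p\|_{\dive,\Omega_p}$, the estimate \eqref{esth} with a constant $C_p$ depending only on $\Omega_p$.

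The main obstacle is not the abstract existence but making the constant $C_p$ genuinely depend only on $\Omega_p$ while honoring the mixed boundary conditions ($\textbf{v}_p\cdot\textbf{n}_p=0$ on $\Gamma_p^N$, Dirichlet data for $p_p$ on $\Gamma_p^D$) inherited from the space $\textbf{V}_p$: one must choose the boundary condition for $\varphi$ compatibly, and then invoke the corresponding vector-potential theorem for domains with mixed boundary portions, which requires $\Omega_p$ to be, say, simply connected with connected boundary components so that no spurious harmonic fields appear. Since the excerpt explicitly assumes each region is connected and defers the non-connected case, I would state these topological hypotheses up front and then cite \cite{HJA:2017} (as the lemma already does) for the precise form of the vector-potential estimate, so that the proof reduces to the two auxiliary problems and the normalization step described above.
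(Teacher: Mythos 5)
The paper itself offers no proof of this lemma: it is stated with the citation to \cite{HJA:2017} and taken as a known analytical tool, so there is no in-paper argument to compare yours against. Your two-step construction --- a Poisson problem to peel off the divergence, followed by a vector/stream potential for the solenoidal remainder and a mean-value normalization --- is the standard route to exactly this kind of $\textbf{H}(\dive)$ Helmholtz decomposition and is, in outline, the right one. You also correctly observe that the statement as written only parses in $d=2$, where $\beta$ is a scalar stream function; in $d=3$ the potential must be vector-valued (consistent with the paper's own definition of $\curl$ in the nomenclature), and you handle that.

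There is one step that, as written, would fail and needs the standard repair. You take $\textbf{w}=\nabla\varphi$ where $\varphi$ solves $\Delta\varphi=\nabla\cdot\textbf{v}_p$ \emph{on $\Omega_p$ with mixed boundary conditions}, and you invoke elliptic regularity to get $\nabla\varphi\in[H^1(\Omega_p)]^d$. On a general (possibly non-convex) polygonal or polyhedral $\Omega_p$, and a fortiori with mixed Dirichlet/Neumann data meeting at corners, the solution is not in $H^2(\Omega_p)$, so $\textbf{w}$ would not land in $[H^1(\Omega_p)]^d$ and the estimate (\ref{esth}) would not follow. The usual fix is to extend $\nabla\cdot\textbf{v}_p$ by zero to a ball (or convex set) $B\supset\Omega_p$, solve the pure Dirichlet problem $\Delta\varphi=\widetilde{\nabla\cdot\textbf{v}_p}$ on $B$, where $H^2(B)$ regularity is available with $\|\varphi\|_{2,B}\lesssim\|\nabla\cdot\textbf{v}_p\|_{0,\Omega_p}$, and then restrict $\nabla\varphi$ to $\Omega_p$; no boundary condition on $\textbf{w}$ or on $\curl\beta$ is required by the lemma (it concerns all of $\textbf{H}(\dive;\Omega_p)$, not the constrained space $\textbf{V}_p$), so your effort to match the $\Gamma_p^N$/$\Gamma_p^D$ splitting is unnecessary and only creates the regularity obstruction. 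With that replacement, and the simple-connectedness hypothesis you already flag for the existence of the potential, your argument closes and yields a constant $C_p$ depending only on $\Omega_p$.
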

\subsection{Reliability of the a posteriori error estimator}\label{s4}
We set  $\textbf{H}=L^{\infty}(0,T;\textbf{V}_f)\times L^{\infty}(0,T;W_f)\times L^{\infty}(0,T;\textbf{V}_p)\times W^{1,\infty}(0,T;W_p)\times W^{1,\infty}(0,T;\textbf{X}_p)\times
L^{\infty}(0,T;\Lambda)$
%\textbf{V}_f\times W_f\times \textbf{V}_p\times W_p\times \textbf{X}_p\times \Lambda$ 
and 
$\textbf{H}_h=
L^{\infty}(0,T;\textbf{V}_{f,h})\times L^{\infty}(0,T;W_{f,h})\times L^{\infty}(0,T;\textbf{V}_{p,h})\times W^{1,\infty}(0,T;W_{p,h})\times W^{1,\infty}(0,T;\textbf{X}_{p,h})\times
L^{\infty}(0,T;\Lambda_h)$.
%\textbf{V}_{f,h}\times W_{f,h}\times \textbf{V}_{p,h}\times W_{p,h}\times \textbf{X}_{p,h}\times \Lambda$.

The first main result is given by the following theorem:
\begin{theorem} (\textbf{Upper Error Bound})
	Let $\textbf{U}=(\textbf{u}_f,p_f,\textbf{u}_p,p_p,\eta_p,\lambda)\in \textbf{H}$ be the exat solution and $\textbf{U}_h=(\textbf{u}_{f,h},p_{f,h},\textbf{u}_{p,h},p_{p,h},\eta_{p,h},\lambda_h)\in \textbf{H}_h$ be the finite element solution. Then, there exist a
	positive constant $C_{\mbox{rel}}$ such that
	the error is bounded globally from above by:
	\begin{eqnarray}
		\parallel \textbf{U}-\textbf{U}_h\parallel_{\textbf{H}_h}&\leq& C_{\mbox{rel}}\left[\Theta(\textbf{U}_h)+\zeta\right].
	\end{eqnarray}
	%where
	%$$\parallel \textbf{U}-\textbf{U}_h\parallel_{L^{\infty}(0,T; \textbf{H}_h)}:=\displaystyle\sup_{0< t\leq T}\parallel \textbf{U}(t)-\textbf{U}_h(t)\parallel_{\textbf{H}_h}.$$
\end{theorem}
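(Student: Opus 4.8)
The plan is to follow the classical scheme for residual-based reliability, the central ingredient being the global inf-sup condition for the monolithic coupled operator, as announced in Section \ref{s1}.

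\emph{Step 1 (reduction to a residual functional).} First I would recast the continuous problem (\ref{f1})--(\ref{f3}) and its discretization (\ref{fh1})--(\ref{fh3}) in the compact monolithic form $\mathcal{A}(\textbf{U},\textbf{W})=\mathcal{F}(\textbf{W})$ for all $\textbf{W}\in\textbf{H}$, where $\mathcal{A}$ collects $a_f$, $a_p^d$, $a_p^e$, $a_{BJS}$, the forms $b_f$, $b_p$, $b_\Gamma$ and the evolution terms involving $s_0\partial_t p_p$ and $\alpha\partial_t(\nabla\cdot\eta_p)$. Combining the coercivity and continuity bounds (\ref{ex1})--(\ref{ex6}), the $L^2$-boundedness of $a_{BJS}$, and the continuous inf-sup conditions for $b_f$, $b_p$ and for the interface form $b_\Gamma$ on $\Lambda=H^{1/2}(\Gamma_{fp})$, one derives a global inf-sup estimate
$$
\|\textbf{U}-\textbf{U}_h\|_{\textbf{H}_h}\;\lesssim\;\sup_{\textbf{0}\neq\textbf{W}\in\textbf{H}}\frac{\mathcal{R}(\textbf{W})}{\|\textbf{W}\|_{\textbf{H}}},\qquad \mathcal{R}(\textbf{W}):=\mathcal{F}(\textbf{W})-\mathcal{A}(\textbf{U}_h,\textbf{W}),
$$
the functional $\mathcal{R}$ depending only on the data and on the computed solution $\textbf{U}_h$.

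\emph{Step 2 (splitting the test function, Galerkin orthogonality).} Fix $\textbf{W}=(\textbf{v}_f,w_f,\textbf{v}_p,w_p,\xi_p,\mu)\in\textbf{H}$ with $\|\textbf{W}\|_{\textbf{H}}\le 1$. For the Darcy velocity $\textbf{v}_p\in\textbf{H}(\dive;\Omega_p)$ I would invoke the Helmholtz decomposition of Lemma \ref{helmoltz}, $\textbf{v}_p=\textbf{w}+\curl\beta$ with $\textbf{w}\in[H^1(\Omega_p)]^d$, $\beta\in H^1(\Omega_p)$, controlled through (\ref{esth}). The components $\textbf{v}_f$, $\xi_p$, $\textbf{w}$ and $\beta$ then all lie in $H^1$-type spaces with the required homogeneous traces, so the vectorial Cl\'ement operator of Lemma \ref{clement} applies to each of them; together with $L^2$-projections for $w_f$, $w_p$ and $\mu$ this furnishes a discrete companion $\textbf{W}_h\in\textbf{H}_h$. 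The Galerkin orthogonality relation displayed just before Section \ref{s3'} then yields $\mathcal{R}(\textbf{W})=\mathcal{R}(\textbf{W}-\textbf{W}_h)$.

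\emph{Step 3 (element-wise integration by parts and the estimator).} Integrating by parts element by element over $\cT_h^f$ and $\cT_h^p$ produces the interior element residuals $\textbf{R}_{f,K}$, $\textbf{R}_{p,K,1}$, $\textbf{R}_{p,K,2}$, $R_{f,K}$, $R_{p,K}$ tested against $\textbf{v}_f-\textbf{v}_{f,h}$ and its analogues, the normal-jump terms $\textbf{J}_{E,\textbf{n}_E,f}$ and $\textbf{J}_{E,\textbf{n}_E,p}$ on interior faces, and on $\cE_h(\Gamma_{fp})$ the interface residuals $R_{E,pf,1},\dots,R_{E,pf,4}$ coming respectively from mass conservation (\ref{i1}), the two stress-balance conditions (\ref{i2}) and the BJS condition (\ref{i3}); a further integration by parts on the $\curl\beta$ contribution yields the elementwise $\curl[\textbf{r}_{p,K,2}(\textbf{U}_h)]$ term of $\Theta_{K,p}^2$. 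Applying Cauchy--Schwarz on each element and face, then the Cl\'ement bounds of Lemma \ref{clement} ($\|v-I_{Cl}^0 v\|_K\lesssim h_K\|v\|_{1,\Delta(K)}$, $\|v-I_{Cl}^0 v\|_E\lesssim h_E^{1/2}\|v\|_{1,\Delta(E)}$), the continuous trace inequality of Lemma \ref{lemd1} and (\ref{esth}), and using the bounded overlap of the patches $\Delta(K)$, $\Delta(E)$ so that $\sum_K\|\textbf{W}\|_{1,\Delta(K)}^2\lesssim\|\textbf{W}\|_{\textbf{H}}^2\le 1$, bounds $\mathcal{R}(\textbf{W})$ by a sum of $h$-weighted residual $L^2$-norms. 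Replacing the exact residuals $\textbf{R}_{\star,K}$, $R_{\star,K}$ by the approximate ones $\textbf{r}_{\star,K}$, $r_{\star,K}$ through the triangle inequality absorbs the discrepancy into the oscillation term $\zeta$; taking the supremum over $\textbf{W}$ and over $t\in(0,T]$ then gives $\|\textbf{U}-\textbf{U}_h\|_{\textbf{H}_h}\lesssim\Theta(\textbf{U}_h)+\zeta$, which is the assertion with $C_{\mbox{rel}}$ the accumulated constant.

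\emph{Expected main obstacle.} The genuinely delicate point is Step 1: proving the global inf-sup condition for the \emph{time-dependent} monolithic operator. One has to merge the individual coercivity / inf-sup properties of the Stokes, Darcy, elasticity, BJS and the two Lagrange-multiplier blocks into a single estimate, and to control the evolution terms $s_0\partial_t p_p$ and $\alpha\partial_t(\nabla\cdot\eta_p)$ --- most naturally by testing with the time derivatives and invoking a Gr\"onwall-type argument, so that the bound is compatible with the $L^\infty(0,T;\cdot)$ / $W^{1,\infty}(0,T;\cdot)$ norms defining $\|\cdot\|_{\textbf{H}_h}$. A secondary difficulty is the interface analysis on the possibly non-matching grids along $\Gamma_{fp}$, where the $H^{1/2}(\Gamma_{fp})$-nature of the multiplier space and the faces of $\cE_h(\Gamma_{fp})$ must be handled carefully with Lemma \ref{lemd1}. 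The remaining ingredients --- element-wise integration by parts, Cauchy--Schwarz, the Cl\'ement estimates, and the patch-overlap counting --- are routine.
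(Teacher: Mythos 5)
Your proposal follows essentially the same route as the paper: a monolithic operator $\textbf{A}$ with a global inf-sup bound, Galerkin orthogonality, the Helmholtz decomposition of Lemma \ref{helmoltz} for the Darcy test velocity, Cl\'ement interpolation for the $H^1$-components, and element-wise integration by parts producing the interior, jump and interface residuals plus the oscillation term $\zeta$. The only notable difference is that you correctly flag the global inf-sup condition for the time-dependent coupled operator as the delicate step, whereas the paper simply asserts it from coercivity without further argument.
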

\begin{proof}
	Let $\textbf{U}=(\textbf{u}_f,p_f,\textbf{u}_p,p_p,\eta_p,\lambda)\in\textbf{H}$ and 
	$\textbf{W}=(\textbf{v}_f,w_f,\textbf{v}_p,w_p,\xi_p,\mu)\in \textbf{H}$. For $t\in]0,T]$ we define the operator $\textbf{A}$ by
	\begin{eqnarray*}
		\textbf{A}(\textbf{U}(t),\textbf{W}(t))&:=& a_f(\textbf{u}_f(t),\textbf{v}_f(t))+a_p^d(\textbf{u}_p(t),\textbf{v}_p(t))+b_p(\textbf{v}_p(t),p_p(t))+\alpha b_p(\xi_p(t),p_p(t))\\
		&+&b_{\Gamma}(\textbf{v}_f(t),\textbf{v}_p(t),\xi_p(t),\lambda(t))+
		\left(s_0\partial_t p_p(t),w_p(t)\right)_{\Omega_p}-\alpha b_p(\partial_t \eta_p(t),w_p(t))\\
		&-& b_p(\textbf{u}_p(t),w_p(t))-b_f(\textbf{u}_f(t),w_f(t))+b_{\Gamma}(\textbf{u}_f(t),\textbf{u}_p(t),\partial_t \eta_p(t);\mu),
	\end{eqnarray*}
and \begin{eqnarray*}
	\textbf{F}(\textbf{W}(t)):=(\textbf{f}_f,\textbf{v}_f(t))_{\Omega_f}+(\textbf{f}_p,\xi_p(t))+(q_f,w_f(t))_{\Omega_f}+(q_p,w_p(t))_{\Omega_p}.
\end{eqnarray*}
Then the continuous problem (\ref{f1})-(\ref{f3}) is equivalent to: Find $\textbf{U}\in \textbf{H}$ such that for all $t\in ]0,T]$, we have:
\begin{eqnarray}\label{fcm}
\textbf{A}(\textbf{U}(t),\textbf{W}(t))=\textbf{F}(\textbf{W}(t)), \mbox{  } \forall \textbf{W}\in \textbf{H}.
\end{eqnarray}
We define the discrete version by the same way: Find $\textbf{U}_h\in\textbf{H}_h$ such that for all $t\in ]0,T]$,
\begin{eqnarray}\label{fdm}
\textbf{A}(\textbf{U}_h(t),\textbf{W}_h(t))=\textbf{F}(\textbf{W}_h(t)), \mbox{  } \forall \textbf{W}_h\in \textbf{H}_h.
\end{eqnarray}
Since for all $t\in]0,T], $ and $\textbf{W}_h\in\textbf{H}_h$
$\textbf{A}(\textbf{U}(t)-\textbf{U}_h(t),\textbf{W}_h(t))=0$, then from (\ref{fcm}) we obtain
\begin{eqnarray*}
	\textbf{A}(\textbf{U}(t)-\textbf{U}_h(t),\textbf{W}(t))&=& \textbf{A}(\textbf{U}(t)-\textbf{U}_h(t), \textbf{W}(t)-\textbf{W}_h(t))\\
	&=& \textbf{A}(\textbf{U}(t), \textbf{W}(t)-\textbf{W}_h(t))-
	\textbf{A}(\textbf{U}_h(t), \textbf{W}(t)-\textbf{W}_h(t))\\
	&=&\textbf{F}(\textbf{W}(t)-\textbf{W}_h(t))-\textbf{A}(\textbf{U}_h(t), \textbf{W}(t)-\textbf{W}_h(t))
	\\
	&=&
	(\textbf{f}_f,\textbf{v}_f(t)-\textbf{v}_{f,h}(t))_{\Omega_f}+(\textbf{f}_p,\xi_p(t)-\xi_{p,h}(t))_{\Omega_p}\\
	&+&(q_f,w_f(t)-w_{f,h}(t))_
	{\Omega_f}+(q_p,w_p(t)-w_{p,h}(t))_{\Omega_p}\\
	&-&\textbf{A}(\textbf{U}_h(t), \textbf{W}(t)-\textbf{W}_h(t))\\
	&=&(\textbf{f}_f-\textbf{f}_{f,h},\textbf{v}_f(t)-\textbf{v}_{f,h}(t))_{\Omega_f}+
	(\textbf{f}_p-\textbf{f}_{p,h}
	,\xi_p(t)-\xi_{p,h}(t))_{\Omega_p}\\
	&+&(q_f-q_{f,h},w_f(t)-w_{f,h}(t))_
	{\Omega_f}
	+(q_p-q_{p,h},w_p(t)-w_{p,h}(t))_{\Omega_p}\\
	&+&(\textbf{f}_{f,h},\textbf{v}_f(t)-\textbf{v}_{f,h}(t))_{\Omega_f}+
	(\textbf{f}_{p,h}
	,\xi_p(t)-\xi_{p,h}(t))_{\Omega_p}\\
	&+&(q_{f,h},w_f(t)-w_{f,h}(t))_
	{\Omega_f}
	+(q_{p,h},w_p(t)-w_{p,h}(t))_{\Omega_p}\\
	&-&\textbf{A}(\textbf{U}_h(t), \textbf{W}(t)-\textbf{W}_h(t)).
\end{eqnarray*}
Hence,
\begin{eqnarray}\label{identity}
		\textbf{A}(\textbf{U}(t)-\textbf{U}_h(t),\textbf{W}(t))=\displaystyle\sum_{K\in\cT_h}
			\textbf{A}_K(\textbf{U}(t)-\textbf{U}_h(t),\textbf{W}(t)),
\end{eqnarray}
where,
\begin{eqnarray*}
	\textbf{A}_K(\textbf{U}(t)-\textbf{U}_h(t),\textbf{W}(t))&:=&
	(\textbf{f}_f-\textbf{f}_{f,h},\textbf{v}_f(t)-\textbf{v}_{f,h}(t))_{\Omega_f\cap K}+
	(\textbf{f}_p-\textbf{f}_{p,h}
	,\xi_p(t)-\xi_{p,h}(t))_{\Omega_p\cap K}\\
	&+&(q_f-q_{f,h},w_f(t)-w_{f,h}(t))_
	{\Omega_f\cap K}
	+(q_p-q_{p,h},w_p(t)-w_{p,h}(t))_{\Omega_p\cap K}\\
	&+&(\textbf{f}_{f,h},\textbf{v}_f(t)-\textbf{v}_{f,h}(t))_{\Omega_f\cap K}+
	(\textbf{f}_{p,h}
	,\xi_p(t)-\xi_{p,h}(t))_{\Omega_p\cap K}\\
	&+&(q_{f,h},w_f(t)-w_{f,h}(t))_
	{\Omega_f\cap K}
	+(q_{p,h},w_p(t)-w_{p,h}(t))_{\Omega_p\cap K}\\
	&-&\textbf{A}_K(\textbf{U}_h(t), \textbf{W}(t)-\textbf{W}_h(t))\\
	&=& (\textbf{R}_{f,K}(\textbf{U}_h(t))-\textbf{r}_{f,K}(\textbf{U}_h(t)),\textbf{v}_f(t)-
	\textbf{v}_{f,h}(t))_{\Omega_f\cap K}\\
	&+&
	(R_{f,K}(\textbf{U}_h(t))-r_{f,K}(\textbf{U}_h(t)),w_f(t)-w_{f,h}(t))_{\Omega_f\cap K}\\
	&+&
	(\textbf{R}_{p,K,1}(\textbf{U}_h(t))-\textbf{r}_{p,K,1}(\textbf{U}_h(t)),\xi_p(t)-\xi_{p,h}(t))_{\Omega_p\cap K}\\
	&+&
	(R_{p,K}(\textbf{U}_h(t))-r_{p,K}(\textbf{U}_h(t)),w_p(t)-w_{p,h}(t))_{\Omega_p\cap K}\\
	&+&\textbf{B}_K(\textbf{U}_h(t),\textbf{W}(t)-\textbf{W}_h(t)),
\end{eqnarray*}
with,
\begin{eqnarray*}
	\textbf{B}_K(\textbf{U}_h(t),\textbf{W}(t)-\textbf{W}_h(t))&=&(\textbf{f}_{f,K},\textbf{v}_f(t)-\textbf{v}_{f,h}(t))_{\Omega_f\cap K}+
	(\textbf{f}_{p,K},\xi_p(t)-\xi_{p,h}(t))_{\Omega_p\cap K}\\
	&+&(q_{f,K},w_{f}(t)-w_{f,h}(t))_{\Omega_f\cap K}
	+
	(q_{p,K},w_{p}(t)-w_{p,h}(t))_{\Omega_p\cap K}\\
	&-&\textbf{A}_K(\textbf{U}_h(t),\textbf{W}(t)-\textbf{W}_h(t)).
\end{eqnarray*}
%Integrate by parts element by element of term $\textbf{A}_K(\textbf{U}_h(t),\textbf{W}(t)-\textbf{W}_h(t))$ to obtain:\\
$\textbf{W}(t)=(\textbf{v}_f(t),w_f(t),\textbf{v}_p(t),w_p(t),\xi_p(t),\mu(t))$ and we take 
$\textbf{W}_h(t)=(\textbf{v}_{f,h}(t),0,\textbf{v}_{p,h}(t),0,\xi_{p,h}(t),0)$ with
$\textbf{v}_{f,h}(t)=\textbf{I}_{\mbox{Cl}}^0(\textbf{v}_{f}(t))$ and $\xi_{p,h}(t)=\textbf{I}_{\mbox{Cl}}^0(\xi_p(t))$. As $\textbf{v}_p(t)\in \textbf{H}(\dive;\Omega_p)$, then by Theorem \ref{helmoltz}, $\textbf{v}_p(t)$ admits the decomposition 
$\textbf{v}_p(t)=\textbf{w}_p(t)+\curl \beta(t)$ where  $\textbf{w}_p(t)\in [H^1(\Omega_p)]^d$ and $\beta(t)\in H^1( \Omega_p)$ with $\int_{\Omega_p}\beta(t)(x) dx=0$ and 
$\parallel\textbf{w}_p\parallel_{L^{\infty}(0,T;[H^1(\Omega_p)]^d)}+
\parallel\beta\parallel_{L^{\infty}(0,T;H^1(\Omega_p))}\leq C_p \parallel\textbf{v}_p\parallel_{L^{\infty}(0,T; \textbf{V}_p)}$. We consider 
$\textbf{v}_{p,h}(t)=\textbf{w}_{p,h}(t)+\curl \beta_{p,h}(t)$ with $\textbf{w}_{p,h}(t)=\textbf{I}_{\mbox{Cl}}(\textbf{w}_p(t))$ and $\beta_{p,h}(t)=I_{\mbox{Cl}}(\beta_p(t))$. Thus $\textbf{v}_p(t)-\textbf{v}_{p,h}(t)=
(\textbf{w}_p(t)-\textbf{w}_{p,h}(t))+\curl (\beta_p(t)-\beta_{p,h}(t))$.
Therefore, integrate by parts element by element we may write:
\begin{eqnarray*}
	\textbf{B}_K(\textbf{U}_h(t),\textbf{W}(t)-\textbf{W}_h(t))&=&
	(\textbf{r}_{f,K}(\textbf{U}_h(t)),\textbf{v}_f(t)-\textbf{v}_{f,h}(t))_{\Omega_f\cap K}\\
	&+&(\textbf{r}_{p,K,1}(\textbf{U}_h(t)),\xi_p(t)-\xi_{p,h}(t))_{\Omega_p\cap K}\\
	&-& (\textbf{r}_{p,K,2}(\textbf{U}_h(t)),\textbf{w}_p(t)-\textbf{w}_{p,h}(t))_{\Omega_p\cap K}\\
	&-&(\curl \textbf{r}_{p,K,2}(\textbf{U}_h(t)),\beta_p(t)-\beta_{p,h}(t) )_{\Omega_p\cap K}\\
	&+& (r_{f,K}(\textbf{U}_h(t)),w_f)_{\Omega_f\cap K}+(r_{p,K}(\textbf{U}_h(t)),w_p)_{\Omega_p\cap K}\\
	&-&\displaystyle\sum_{E\in\cE_h(\Omega_f\cap K)} (\textbf{J}_{E,\textbf{n}_E,f}(\textbf{U}_h(t)),\textbf{v}_f(t)-\textbf{v}_{f,h}(t))_E\\
	&-&
	\displaystyle\sum_{E\in\cE_h(\Omega_p\cap K)} (\textbf{J}_{E,\textbf{n}_E,p}(\textbf{U}_h(t)),\xi_p(t)-\xi_{p,h}(t))_E\\
	&+&
	\displaystyle\sum_{E\in\cE_h(\Gamma_{fp})}
	(R_{E,pf,1}(\textbf{U}_h(t)),(\textbf{v}_f(t)-\textbf{v}_{f,h}(t))\cdot\textbf{n}_{f,E})_E\\
	&+&
	\displaystyle\sum_{E\in\cE_h(\Gamma_{fp})}
	(R_{E,pf,2}(\textbf{U}_h(t)),(\textbf{v}_f(t)-\textbf{v}_{f,h}(t))\cdot\textbf{n}_{f,E})_E\\
	&+&
	\displaystyle\sum_{E\in\cE_h(\Gamma_{fp})}
	(R_{E,pf,3}(\textbf{U}_h(t)),(\xi_p(t)-\xi_{p,h}(t)))_E\\
	&-&
	\displaystyle\sum_{E\in\cE_h(\Gamma_{fp})}
	\displaystyle\sum_{j=1}^{d-1}(R_{E,pf,4}(\textbf{U}_h(t))(j),\textbf{v}_f(t)-\textbf{v}_{f,h}(t)\cdot\tau_j)_{E}
	\end{eqnarray*}
Coercivity of operator $\textbf{A}$ leads to inf-sup condition:
\begin{eqnarray}\label{inf-sup}
	\parallel \textbf{U}-\textbf{U}_h\parallel_{\textbf{H}_h}\leq 
	\displaystyle\sup_{\textbf{W}\in \textbf{H}}\frac{|\textbf{A}(\textbf{U}-\textbf{U}_h,\textbf{W})|}
	{\parallel \textbf{W}\parallel_{\textbf{H}}}.
\end{eqnarray}
By consequently , the identity (\ref{identity}), inf-sup condition of operator $\textbf{A}$ (\ref{inf-sup}), Cauchy-Schwarz inequality, estimation of Lemma (\ref{helmoltz}) and the approximation properties of Lemma \ref{clement} imply the required estimate and finish the proof.
\end{proof}
\subsection{Efficiency of the a posteriori error estimator}\label{s5}
For $K\in\cT_h$, we set 
\begin{eqnarray*}
\textbf{H}_h(K)&=&
L^{\infty}(0,T;\textbf{V}_{f,h}(K))\times L^{\infty}(0,T;W_{f,h}(K))\times L^{\infty}(0,T;\textbf{V}_{p,h}(K))\\
&\times& W^{1,\infty}(0,T;W_{p,h}(K))
\times W^{1,\infty}(0,T;\textbf{X}_{p,h}(K))\times
L^{\infty}(0,T;\Lambda_h(K)),
\end{eqnarray*}
where
\begin{eqnarray*}
	\textbf{V}_{f,h}(K):=\left\{{\textbf{v}_{f,h}}_{|K}: \textbf{v}_{f,h}\in\textbf{V}_{f,h} \right\};
	\mbox{     }
	W_{f,h}(K):=\left\{ {w_{f,h}}_{|K}: w_{f,h}\in W_{f,h}\right\};\\
	\textbf{V}_{p,h}(K):= \left\{{\textbf{v}_{p,h}}_{|K}: \textbf{v}_{p,h}\in\textbf{V}_{p,h} \right\};
	\mbox{      }	W_{p,h}(K):=\left\{ {w_{p,h}}_{|K}: w_{p,h}\in W_{p,h}\right\};\\
	\textbf{X}_{p,h}(K):=\left\{{\xi_{p,h}}_{|K}: \xi_{p,h}\in\textbf{X}_{p,h}\right\};
	\mbox{  and  }
	\Lambda_h(K):=\left\{{\lambda_h}_{|K}: \lambda_h\in \Lambda_h\right\}.
\end{eqnarray*}
The error estimator $\Theta(\textbf{U}_h)$ is consider efficient if it satisfies the following theorem:
\begin{theorem}
	[\textbf{Lower Error Bound}]
	Let $\textbf{U}=(\textbf{u}_f,p_f,\textbf{u}_p,p_p,\eta_p,\lambda)\in \textbf{H}$ be the exat solution and $\textbf{U}_h=(\textbf{u}_{f,h},p_{f,h},\textbf{u}_{p,h},p_{p,h},\eta_{p,h},\lambda_h)\in \textbf{H}_h$ be the finite element solution. Then, there exist a
	positive constant $C_{\mbox{eff}}$  such that
	the error is bounded locally from below for all $K\in\cT_h$ by:
	\begin{eqnarray}
		\Theta_K(\textbf{U}_h)\leq C_{\mbox{eff}} \left[\parallel\textbf{U}-\textbf{U}_h\parallel_{\textbf{H}_h(K)}+\displaystyle\sum_{K'\subset \tilde{w}_K} \zeta_K\right],
	\end{eqnarray}
	where $\tilde{w}_K$ is a finite union of neighborning elements of $K$.
	 
\end{theorem}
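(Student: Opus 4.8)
The plan is to follow the classical Verf\"urth bubble-function technique, bounding separately each of the three groups of terms that constitute $\Theta_K$ in \eqref{indK}--\eqref{indK3}, and exploiting throughout the observation recorded in the Remark: every residual vanishes when $\textbf{U}_h$ is replaced by the exact solution $\textbf{U}$. Consequently each residual can first be rewritten as a combination of the errors $\textbf{e}_f,\textbf{e}_p,\textbf{e}_s,e_{fp},e_{pp},e_{\lambda}$ and of the data oscillations $\textbf{R}_{\star,K}-\textbf{r}_{\star,K}$, $R_{\star,K}-r_{\star,K}$ collected in $\zeta_K$. Since all norms appearing in $\Theta_K$ are of the form $L^{\infty}(0,T;L^2(\cdot))$, I would fix $t\in\,]0,T]$, prove the estimate with constants independent of $t$, and then pass to the supremum over $t$.

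First, for the interior element residuals I would test the relevant equation with $b_K$ times the residual itself (a polynomial once the data has been projected onto $\mathbb{P}^0$ or $\mathbb{P}^1$). Because $b_K$ vanishes on $\partial K$, integration by parts over $K$ produces no boundary term; using \eqref{cl1} to bound from below, the equation satisfied by $\textbf{U}$, and the inverse estimate \eqref{cl2}, one obtains $h_K\|\textbf{r}_{f,K}(\textbf{U}_h(t))\|_K\lesssim \|\textbf{e}_f(t)\|_{1,K}+h_K\|\textbf{R}_{f,K}-\textbf{r}_{f,K}\|_K$, and analogously for $\textbf{r}_{p,K,1},\textbf{r}_{p,K,2},r_{f,K},r_{p,K}$ (the last two using scalar bubbles together with the divergence constraints \eqref{stokes2}, \eqref{darcy2}). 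For the $\curl$-term I would use that $\textbf{r}_{p,K,2}(\textbf{U}_h)=\mu K^{-1}\textbf{u}_{p,h}+\nabla p_{p,h}$ while $\mu K^{-1}\textbf{u}_p+\nabla p_p=0$, so that $\curl\textbf{r}_{p,K,2}(\textbf{U}_h)=\curl\bigl(\mu K^{-1}(\textbf{u}_{p,h}-\textbf{u}_p)\bigr)$; testing with $b_K\,\curl\textbf{r}_{p,K,2}(\textbf{U}_h(t))$ and invoking \eqref{cl1}--\eqref{cl2} gives $h_K\|\curl\textbf{r}_{p,K,2}(\textbf{U}_h(t))\|_K\lesssim\|\textbf{e}_p(t)\|_K$.

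Next, for the interior face jumps $\textbf{J}_{E,\textbf{n}_E,f}$ on $E\in\cE_h(\Omega_f)$ (and $\textbf{J}_{E,\textbf{n}_E,p}$ on $E\in\cE_h(\Omega_p)$) I would use the face bubble $b_E$ and the vectorial extension $\textbf{L}$, testing with $b_E\,\textbf{L}(\textbf{J}_{E,\textbf{n}_E,f}(\textbf{U}_h(t)))$, which is supported in $\omega_E$. Integrating by parts element by element over the two simplices sharing $E$, the volume contributions are precisely the interior residuals already controlled in the previous step, and \eqref{cl3}--\eqref{cl4} then yield $h_E^{1/2}\|\textbf{J}_{E,\textbf{n}_E,f}(\textbf{U}_h(t))\|_E\lesssim\|\textbf{U}-\textbf{U}_h\|_{\textbf{H}_h(\omega_E)}+(\text{oscillation on }\omega_E)$. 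For the four interface residuals $R_{E,pf,l}$, $l=1,\dots,4$, on $E\in\cE_h(\Gamma_{fp})$, I would note that they encode exactly the interface conditions \eqref{i1}--\eqref{i3}, each satisfied by $\textbf{U}$, so each $R_{E,pf,l}$ is a trace of errors on $E$; I would localize with a face bubble on $E$ together with a suitable extension into the adjacent element on the $\Omega_f$ and/or $\Omega_p$ side, insert it into the variational residual, integrate by parts, and use \eqref{cl3}--\eqref{cl4} and the continuous trace inequality (Lemma \ref{lemd1}) to pass from boundary $L^2$-norms of the errors to the $\textbf{H}_h(\cdot)$-norm on the patch of $E$. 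Squaring and summing the three groups of bounds over the faces and over $K$, and absorbing all oscillations into $\sum_{K'\subset\tilde w_K}\zeta_K$ with $\tilde w_K$ the shape-regularity-bounded union of elements touching $K$ or an interface face of $K$, then yields the asserted local lower bound.

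The hard part will be the treatment of the interface residuals: unlike the interior terms, $R_{E,pf,1}$ and $R_{E,pf,4}$ couple traces from both subproblems (and involve the time derivative $\partial_t\xi_{p,h}$), so one must build a single localized test function whose restrictions to $\Omega_f$ and $\Omega_p$ lie in spaces compatible with $\textbf{V}_{f,h}$ and $\textbf{V}_{p,h}$ while still concentrating a single face bubble on $E$, and one must cope with the fact that $\cT_h^f$ and $\cT_h^p$ may be non-matching along $\Gamma_{fp}$. This is where shape-regularity and the choice $\Lambda_h=\textbf{V}_{p,h}\cdot\textbf{n}_p|_{\Gamma_{fp}}$ keep the constants uniform, and the $W^{1,\infty}(0,T;\cdot)$ component of the norm absorbs the $\partial_t$ contributions with only routine modifications.
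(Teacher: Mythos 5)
Your plan is the classical Verf\"urth bubble-function argument, and it coincides exactly with the strategy the paper announces in its introduction; you should know, however, that the paper's own ``proof'' of this theorem consists of the single sentence ``We begin by bounding each the residuals separately'' and carries out none of the steps, so there is no substantive argument in the paper to compare yours against. Relative to that, your outline is already the more complete document: the interior bounds for $\textbf{r}_{f,K}$, $\textbf{r}_{p,K,1}$, $\textbf{r}_{p,K,2}$ and for $\curl\textbf{r}_{p,K,2}$ via \eqref{cl1}--\eqref{cl2} are the right steps (note that $r_{f,K}$ and $r_{p,K}$ need no bubble at all, since for instance $r_{f,K}=(q_{f,K}-q_f)+\nabla\cdot\textbf{e}_f$ gives the bound directly, consistent with the absence of an $h_K$ weight on these two terms in \eqref{indK1}--\eqref{indK2}), and the jump terms follow from \eqref{cl3}--\eqref{cl4} as you say.

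The genuine gap --- which you flag honestly but do not close, and which the paper does not address either --- is the interface block $\Theta_{K,pf}$. The residuals $R_{E,pf,1},\dots,R_{E,pf,4}$ involve traces on $E\subset\Gamma_{fp}$ of quantities whose errors are controlled only in $L^2(\Omega_p)$ or $H(\dive;\Omega_p)$: for example $R_{E,pf,2}$ contains $e_{pp}=p_p-p_{p,h}$ with $p_p\in W_p=L^2(\Omega_p)$, and $R_{E,pf,1}$ contains $\textbf{e}_p\cdot\textbf{n}_p$ with $\textbf{e}_p$ only in $H(\dive;\Omega_p)$, whose normal trace lies in $H^{-1/2}(\Gamma_{fp})$ rather than $L^2(E)$. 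A face bubble combined with Lemma \ref{lemd1} therefore does not convert $h_E^{1/2}\parallel R_{E,pf,l}\parallel_{L^2(E)}$ into the $\textbf{H}_h(K)$-norm of the error without either extra regularity of the exact solution or a reformulation of these estimator contributions in weaker (dual) trace norms; the possibly non-matching grids along $\Gamma_{fp}$ and the Lagrange multiplier error $e_{\lambda}\in H^{1/2}(\Gamma_{fp})$ compound the difficulty, and the $\partial_t\xi_{p,h}$ terms require the $W^{1,\infty}$ component of the norm as you note. Until that step is supplied, the efficiency of $\Theta_{K,pf}$ is not proved --- by you or by the paper.
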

\begin{proof}
	We begin by bounding each the residuals separately.
\end{proof}
\section{Discussion}\label{s6}
In this paper we have discussed a posteriori error estimates for a finite element approximation of the Stokes-Biot system.  A residual type a posteriori error estimator is provided, that is both reliable and efficient. Many issues remain to be addressed in this area, let us mention other types of a posteriori error estimators and nonconforming finite element methods or implementation and convergence analysis of adaptive finite element methods. Further it is well known that an internal layer appears at the interface $\Gamma_{fp}$ as the permeability tensor degenerates, in that case anisotropic meshes have to be used in this layer (see for instance \cite{H:2015}). Hence we intend to extend our results to such anisotropic meshes.
\section{Nomenclatures}
\begin{itemize}
	\item $\Omega\subset \mathbb{R}^d, d\in\{2,3\}$ bounded domain 
	\item $\Omega_p:$ the poroelastic medium domain
	\item $\Omega_f=\Omega\smallsetminus \overline{\Omega}_d$
	\item $\Gamma_{fp}=\partial \Omega_f\cap\partial \Omega_p$
	\item $\Gamma_{\star}=\partial \Omega_{\star}\smallsetminus\Gamma_{fp},$ $\star=f,p$
	\item $\textbf{n}_f$ (resp. $\textbf{n}_p$) the unit outward normal vector along $\partial \Omega_f$ (resp. $\partial \Omega_p$)
	\item $\textbf{u}_f$: the fluid velocity in $\Omega_f$
	\item $p_f$: the fluid pressure in $\Omega_f$
	\item $\textbf{u}_p, \eta_p$: the fluid velocities in $\Omega_p$
	\item $p_p$: the fluid pressure in $\Omega_p$
	
	\item 
	In $2D$, the $\curl$ of a scalar function $w$ is given as usual by 
	$$\curl w:=\left(\frac{\partial w}{\partial x_2},-\frac{\partial w}{\partial x_1}\right)^{\top}$$
	\item 
	In $3D$, the $\curl $ of a vector function $\textbf{w}=(w_1,w_2,w_3)$
	is given as usual by $\curl \textbf{w}:=\nabla \times \textbf{w}$ namely,
	\begin{eqnarray*}
		\curl \textbf{w}&:=& 
		\left(\frac{\partial w_3}{\partial x_2}-\frac{\partial w_2}{\partial x_3},
		\frac{\partial w_1}{\partial x_3}-\frac{\partial w_3}{\partial x_1},
		\frac{\partial w_2}{\partial x_1}-\frac{\partial w_1}{\partial x_2}\right)
	\end{eqnarray*}
	\item $\mathbb{P}^k$: the space of polynomials of total degree not larger than $k$
	\item $\cT_h$: triangulation of $\Omega$
	\item $\cT_h^{\star}$: the corresponding induced triangulation of $\Omega_{\star}$, $\star\in\{f,p\}$
	\item For any $K\in\cT_h$, $h_K$ is the diameter of $K$ and $\rho_K=2r_K$ is the diameter of the largest ball inscribed into $K$
	\item $h:=\displaystyle\max_{K\in\cT_h} h_K$  and   $\sigma_h:=\displaystyle\max_{K\in\cT_h} \frac{h_K}{\rho_K}$
	\item $\cE_h$: the set of all the edges or faces of the triangulation
	\item $\cE(K)$: the set of all the edges ($N=2$) or faces ($N=3$) of a element $K$
	\item $\cE_h:=\displaystyle\bigcup_{K\in\cT_h} \cE(K)$
	\item $\mathcal{N}(K)$: the set of all the vertices of a element $K$
	\item $\mathcal{N}_h:=\displaystyle\bigcup_{K\in\cT_h} \mathcal{N}(K)$
	\item For $\mathcal{A}\subset \overline{\Omega}$, $\mathcal{E}_h(\mathcal{A}):=\{E\in\mathcal{E}_h: E\subset \mathcal{A}\}$
	\item For $E\in\cE_h(\Omega_{\star})$, we associate a unit vector $\textbf{n}_{E,\star}$ such that $\textbf{n}_{E,\star}$ is orthogonal to $E$ and equals to the unit exterior normal vector to $\partial\Omega_{\star}$, $\star\in\{f,p\}$
	\item For $E\in\cE_h(\Omega_{\star})$, $[\phi]_E$ is the jump across $E$ in the direction of $\textbf{n}_{E,\star}$
	\item In order to avoid excessive use of constants, the abbreviations $x\lesssim y$ and $x\sim y$ 
	stand for $x\leqslant cy$ and $c_1x\leqslant y \leqslant c_2x$, respectively, with positive constants independent of $x$, $y$ or $\cT_h$
	\item $\partial_t=\frac{\partial}{\partial_t}$
	\item $\parallel \phi\parallel_{L^2(0,T;X)}:=\left(\int_{0}^{T}\parallel\phi(s)\parallel_X ds\right)^{1/2}$
	\item 
	$\parallel\phi\parallel_{L^{\infty}(0,T;X)}:=\displaystyle\sup_{0< t\leq T}\parallel\phi(t)\parallel_{X}$
	\item 
	$\parallel\phi\parallel_{W^{1,\infty}(0,T;X)}:=\displaystyle\sup_{0< t\leq T}\left\{
	\parallel\phi(t)\parallel_X, \parallel\partial_t\phi\parallel_X\right\}$.
\end{itemize}

\end{document}